\documentclass[11pt]{amsart}
\usepackage[margin=1.15in]{geometry}
\usepackage{amscd,amssymb, amsmath}

\usepackage{graphicx}
\usepackage{amsfonts}
\usepackage{mathrsfs}    
\usepackage{amsmath}    
\usepackage{amsthm}     
\usepackage{amscd}      
\usepackage{amssymb}    
\usepackage{eucal}      
\usepackage{latexsym}   
\usepackage{graphicx}   
\usepackage{verbatim}   
\usepackage[all,cmtip,line]{xy}    
\usepackage{xcolor}
\definecolor{linkcolor}{RGB}{72,162,162}
\usepackage[colorlinks=true,linkcolor=linkcolor,citecolor=linkcolor,
urlcolor=black,bookmarksopen=true]{hyperref}
\usepackage{bookmark}

\usepackage{enumerate}
\usepackage{tikz-cd}
\usepackage{tikz}

 \newlength{\baseunit}               
 \newcount{\numlines}                
\newcounter{IntroThm}                           
\newtheorem{introthm}[IntroThm]{Theorem}        
\newtheorem{introcorollary}[IntroThm]{Corollary}

\newcounter{thmcounter}
\numberwithin{thmcounter}{section}

\numberwithin{equation}{section}

\newtheorem{theorem}[thmcounter]{Theorem}
\newtheorem{proposition}[thmcounter]{Proposition}
\newtheorem{lemma}[thmcounter]{Lemma}

\theoremstyle{definition}
\newtheorem{definition}[thmcounter]{Definition}

\newtheorem{remark}[thmcounter]{Remark}

\newtheorem{question}[thmcounter]{Question}
\newtheorem{conjecture}{Conjecture}
\newtheorem*{conjecture*}{Conjecture}

\newtheoremstyle{claim}{9pt}{3pt}{}{\parindent}{\bf}{.}{1em}{}

\theoremstyle{claim}

\newcommand{\bbZ}{\mathbb{Z}}

\newcommand{\cO}{\sO}

\def\PP{\mathbb{P}}

\newcommand{\sF}{\mathscr{F}}

\newcommand{\sO}{\mathscr{O}}                    

\newcommand{\sI}{\mathscr{I}}                    

\DeclareMathOperator{\gon}{gon}                  

\newcommand{\propernormal}{%
  \mathrel{\ooalign{$\lneq$\cr\raise.22ex\hbox{$\lhd$}\cr}}}

\newcommand{\Sym}{\operatorname{Sym}}

\newcommand{\coker}{\operatorname{coker}}

\newcommand{\lremind}[1]{{\bf[label:  #1]}}
\renewcommand{\lremind}[1]{{}}

\newcommand{\cut}[1]{}

\begin{document}
\pagestyle{plain}
\raggedbottom
\title{Generalizing Green's Syzygy Theorems to Symmetric Products of Curves}
\author[W. Niu]{Wenbo Niu}
\address{Department of Mathematics,University of Arkansas, 850 W Dickson Street, Fayetteville, AR 72701}
\email{\href{mailto:wenboniu@uark.edu}{wenboniu@uark.edu}}
\author[P. Yi Wei]{Peter Yi Wei}
\address{Department of Mathematics,University of Arkansas, 850 W Dickson Street, Fayetteville, AR 72701}
\email{\href{mailto:peterwei.math@gmail.com}{peterwei.math@gmail.com}}

\begin{abstract}
Let $C$ be a smooth projective curve of genus $g$, and let $T_{n,L}$ denote
the tautological line bundle on its $n$-th symmetric product $C_n$. We prove
effective vanishing criteria for Koszul cohomology
$K_{p,q}(C_n,T_{n,B};T_{n,L})$, extending two classical theorems of Mark Green.
The main ingredient is a weight-one vanishing theorem proved by
induction on $n$, using a filtration of the kernel bundle of $T_{n,L}$
to reduce the required cohomological vanishings to lower symmetric products. As a consequence, when $g\geq1$ and $n\geq2$,
$T_{n,L}$ satisfies property $(N_p)$ for
$\deg(L)\geq(n+1)g+p$. This bound is sharp for elliptic symmetric squares
and also gives evidence for a folklore conjecture on adjoint syzygies.
\end{abstract}
\maketitle

\section{Introduction}

\noindent Throughout, we work over an algebraically closed field $\Bbbk$ of
characteristic zero. Let $X$ be a projective variety, let $L$ be a globally
generated ample line bundle, and let $B$ be a line bundle on $X$. Set
$S:=\operatorname{Sym}H^0(X,L)$. The associated module of sections
\[
R_X(B;L):=\bigoplus_{q\in\bbZ}H^0(X,B+qL)
\]
is a finitely generated graded $S$-module and has a minimal graded free resolution over $S$:
\[
\cdots \longrightarrow E_p \longrightarrow \cdots \longrightarrow E_2  \longrightarrow E_1 \longrightarrow E_0 \longrightarrow R_X(B;L) \longrightarrow 0.
\]
Each graded free $S$-module $E_p$ for $p\geq 0$ has the form
\[
E_p=\bigoplus_{q\in \bbZ}K_{p,q}(X,B;L)\otimes S(-p-q).
\]
Here $K_{p,q}(X,B;L)$ is the cohomology of the middle term of
\[
\bigwedge^{p+1}H^0(L)\otimes H^0(B+ (q-1)L)\to\bigwedge^p H^0(L)\otimes H^0(B+ qL) \to\bigwedge^{p-1}H^0(L)\otimes H^0(B+ (q+1)L).
\]
We regard $K_{p,q}(X,B;L)$ as the space of \textit{$p$-th syzygies of weight $q$}. For further background on Koszul cohomology, we refer to
Green's foundational
papers~\cite{Green:KoszulI,Green:KoszulII}.

When $B=\sO_X$, we write $R_X(L):=R_X(\sO_X;L)$ and
$K_{p,q}(X,L):=K_{p,q}(X,\sO_X;L)$. We say $L$
satisfies \emph{property $(N_p)$} if
\[
K_{i,q}(X,L)=0
\quad\text{for all }0\leq i\leq p\text{ and }q\geq2;
\]
equivalently, the resolution of $R_X(L)$ is linear through the $p$-th stage:
\[
\cdots\longrightarrow\bigoplus S(-p-1)\longrightarrow\cdots
\longrightarrow\bigoplus S(-3)\longrightarrow\bigoplus S(-2)
\longrightarrow S\longrightarrow R_X(L)\longrightarrow0.
\]
Classically, property
$(N_0)$ means that $L$ is normally generated, i.e., that the natural map
\[
\Sym^n H^0(X,L)\longrightarrow H^0(X,L^{\otimes n})
\]
is surjective for every $n\geq0$. Property $(N_1)$ means, in addition, that
the homogeneous ideal of $X\subseteq\mathbb{P}(H^0(X,L)^{\vee})$ is generated by
quadrics.

A celebrated theorem of Green~\cite{Green:KoszulI} states that if $C$ is a
smooth projective curve of genus $g\geq0$ and $L$ is a line bundle on $C$
with $\deg(L)\geq2g+1+p$, then $L$ satisfies property $(N_p)$. In
particular,
\[
K_{i,q}(C,L)=0 \quad\text{for all }0\leq i\leq p \textrm{ and } q\geq 2.
\]
Building on earlier work of Castelnuovo, Mattuck--Mayer, Mumford,
Saint-Donat, Fujita, and others, Green's $(2g+1+p)$ theorem helped initiate the modern
study of syzygies on curves and higher-dimensional varieties
~\cite{ein-lazarsfeld-arbitrary,ein-lazarsfeld-asymptotic,Ein:Gonality}.
Green also proved the following vanishing for projective space
\cite{Green:KoszulII}: for $n,d\geq1$, $p\geq0$, and $b\in\bbZ$,
\[
K_{p,q}\big(\PP^n,\sO_{\PP^n}(b);\sO_{\PP^n}(d)\big)=0
\quad\text{whenever}\quad (q-1)d+b\geq p.
\]

These two theorems of Green motivate us to study tautological line bundles on symmetric
products of curves. For $n\geq1$, let $C_n$ denote
the symmetric product of degree $n$, and let
$q_n:C^n\to C_n$ be the quotient map. For a line bundle $L$ on
$C$, the natural $\mathfrak S_n$-equivariant structure on
$L^{\boxtimes n}$ determines a line bundle $T_{n,L}$ on $C_n$
characterized by $q_n^*T_{n,L}=L^{\boxtimes n}$. These bundles also arise
naturally in the geometry of secant varieties of curves \cite{ENP}. The
construction is multiplicative:
$T_{n,L_1+L_2}\cong T_{n,L_1}+ T_{n,L_2}$. It contains both of
Green's settings: $C_1=C$ and
$T_{1,L}=L$, whereas for $C\cong\PP^1$ one has
$C_n\cong\PP^n$ and
$T_{n,\sO_{\PP^1}(d)}\cong\sO_{\PP^n}(d)$.

Our main result extends both vanishings on curves and projective spaces to $C_n$.
\begin{introthm}\label{thm-main}
Let $C$ be a smooth projective curve of genus $g\geq0$, let $B,L$ be line
bundles on $C$, and let $n\geq1$ and $p\geq 0, q\in \bbZ$. Assume
\begin{enumerate}[\normalfont(i)]
\setlength{\itemsep}{0.4\baselineskip}
\item $(q-1)\deg(L)+\deg(B)\geq(n+1)g+p$;
\item $\displaystyle
\deg(L)\ge
\begin{cases}
2g+1, & n\ge2 \text{ or } g=0,\\
2g+p+1, & n=1 \text{ and } g\ge1.
\end{cases}$
\end{enumerate}
Then $K_{p,q}(C_n,T_{n,B};T_{n,L})=0$.
\end{introthm}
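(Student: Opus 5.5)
We argue through the kernel bundle of $T_{n,L}$. Put $V:=H^0(C_n,T_{n,L})\cong \wedge^n H^0(C,L)$; this holds because $\deg L\ge 2g+1$ forces $L$ very ample and $T_{n,L}$ globally generated. Let
\[
0\to M_{n,L}\to V\otimes\sO_{C_n}\to T_{n,L}\to 0
\]
be the evaluation sequence. By the standard kernel-bundle description of Koszul cohomology,
\[
K_{p,q}(C_n,T_{n,B};T_{n,L})\cong \coker\Big(\wedge^{p+1}V\otimes H^0(T_{n,B+(q-1)L})\to H^0\big(\wedge^p M_{n,L}\otimes T_{n,B+qL}\big)\Big).
\]
When $q\ge 1$ this cokernel injects into $H^1\big(C_n,\wedge^{p+1}M_{n,L}\otimes T_{n,B+(q-1)L}\big)$. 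Hypothesis (i) involves $q$ only through $B':=B+(q-1)L$, so it is enough to prove the weight-one statement: for any line bundle $B'$ with $\deg B'\ge (n+1)g+p$,
\[
H^1\big(C_n,\wedge^{p+1}M_{n,L}\otimes T_{n,B'}\big)=0.
\]
The case $q\le 0$ is handled separately. There (ii) gives $\deg B\ge (n+1)g+p+(1-q)\deg L$, so $\deg B$ is very large. I would either apply the same $H^1$-vanishing to the shifted twist, or use duality for Koszul cohomology to move these groups into a range where $H^i$-vanishing of $T_{n,B}$ applies.

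The weight-one vanishing is proved by induction on $n$, with $p$ arbitrary. For $n=1$ it is Green's curve theorem in the form $H^1(C,\wedge^{p+1}M_L\otimes B')=0$ when $\deg B'\ge 2g+p$ and $\deg L\ge 2g+p+1$. This is the source of the special hypothesis for $n=1$. One route is to filter $M_L$ by points and use that $\wedge^{p+1}M_L$ is semistable of slope $-(p+1)\deg L/(\deg L-g)>-2$.

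For $n\ge 2$, fix a general point $x\in C$. The addition map $\sigma_x:C_{n-1}\to C_n$, $D\mapsto D+x$, has image $X_x\cong C_{n-1}$, and $T_{n,L}|_{X_x}\cong T_{n-1,L}\otimes(\text{a twist})$. Restricting sections gives the filtration
\[
0\to M_{n,L}'\to M_{n,L}\to \sigma_{x*}M_{n-1,L(-x)}\to 0 \quad\text{(up to trivial summands)}.
\]
Here the kernel comes from sections vanishing on $X_x$, which is $T_{n,L}(-X_x)\cong T_{n,L(-x)}$-type data, using $\wedge^n H^0(L)=\wedge^nH^0(L(-x))\oplus \wedge^{n-1}H^0(L(-x))\otimes\langle s\rangle$. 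Taking $\wedge^{p+1}$ then produces a filtration whose graded pieces are of two kinds:
\begin{itemize}
\item pieces like $\wedge^{i}M_{n,L(-x)}\otimes T_{n,B'}(-?\,X_x)$ on $C_n$;
\item pushforwards of $\wedge^{j}M_{n-1,L(-x)}\otimes T_{n-1,B'(-x)}$-type bundles from $C_{n-1}$.
\end{itemize}
The second kind is handled by induction. The degree count is the reason the bound reads $(n+1)g$: passing from $n$ to $n-1$ loses exactly $g$ from $\deg B'$ and one from $\deg L$ and from $p$, which matches the inductive hypothesis $\deg\ge ng+p'$. The first kind is handled by a secondary induction on $\deg L$, or by iterating over $n$ general points. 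The condition $\deg L\ge 2g+1$ keeps $L(-x)$ globally generated with the expected $h^0$.

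I expect the main obstacle to be setting up this filtration precisely, that is, identifying the graded pieces of $\wedge^{p+1}M_{n,L}$ restricted to $X_x$, and checking that every piece yields the numerics $(n+1)g+p$ exactly rather than a worse constant. Part of this is the $H^1$-vanishing of twisted tautological bundles on $C_n$, which I would get from $q_{n*}$ and K\"unneth on $C^n$, taking $\mathfrak S_n$-invariants. I would first try the simplest version: pieces that are exactly $\sigma_{x*}$ of the $(n-1)$-objects, plus a line-bundle term whose $H^1$ vanishes by K\"unneth once $\deg B'\ge 2g-1+\ldots$.
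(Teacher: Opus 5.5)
Your reduction to weight one is exactly how the paper deduces the theorem from its weight-one result: you replace $B$ by $B'=B+(q-1)L$ and prove $H^1(C_n,\wedge^{p+1}M_{T_{n,L}}\otimes T_{n,B'})=0$ for $\deg B'\ge(n+1)g+p$. It works for every $q\in\bbZ$ at once, because the Koszul complex only involves $B+qL$, so $q\le 0$ needs no separate treatment.

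The gap is that this weight-one vanishing, which is the entire content, is not proved, and your sketch rests on wrong identifications. For the tautological bundle:
\begin{itemize}
\item $H^0(C_n,T_{n,L})=S^nH^0(C,L)$, not $\wedge^nH^0(C,L)$;
\item $T_{n,L}|_{X_x}\cong T_{n-1,L}$, with no twist;
\item restriction of sections is $0\to S^nH^0(L(-x))\to S^nH^0(L)\to S^{n-1}H^0(L)\to 0$.
\end{itemize}
Your $\wedge^n$-splitting, the quotient $M_{n-1,L(-x)}$ and the coefficient $T_{n-1,B'(-x)}$ are the formulas for the determinant bundle $N_{n,L}=T_{n,L}(-\delta_n)$. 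For $T_{n,L}$, the quotient of $M_{T_{n,L}}$ on $X_x$ is $M_{T_{n-1,L}}$, with the same $L$ and coefficient $T_{n-1,B'}$. Because this quotient has rank bigger than one, $\wedge^{p+1}$ of the modification has quotient pieces supported on thickenings of $X_x$. Their graded pieces carry twists by $\sO_{X_x}(-iX_x)$, with $i$ up to the exterior degree minus one, and your count ignores these. The paper avoids this by using $\Gamma=M_{T_{n,L}}/M_{T_{n,L(-x)}}$. This is an elementary transformation of $S^{n-1}H^0(L)\otimes\sO_{C_n}$ along the \emph{rank-one} sheaf $T_{n-1,L}$ on $X_x$, so Lemma~\ref{lem-wedge-coker} gives $0\to\wedge^aS^{n-1}H^0(L)\otimes\sO_{C_n}(-X_x)\to\wedge^a\Gamma\to i_*\wedge^aM_{T_{n-1,L}}\to 0$. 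In particular, your claim that each dimension drop costs exactly $g$ has no mechanism behind it.

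The decisive missing idea concerns the pieces that stay on $C_n$ and involve $M_{T_{n,L(-x)}}$. A secondary induction on $\deg L$ cannot run, because hypothesis (ii) is not stable under removing points: for $\deg L=2g+1$, $L(-x)$ already violates it.
\begin{itemize}
\item The paper removes $r-1$ general points to reach a basepoint-free pencil $A$ of degree $g+1$. This filters $M_{T_{n,L}}$ with graded pieces $\Gamma_{L_\ell}$ and $M_{T_{n,A}}$.
\item $M_{T_{n,A}}$ has rank $n$, and $\wedge^aM_{T_{n,A}}$ is resolved by $\wedge^{a+j}S^nH^0(A)\otimes T_{n,-jA}$.
\item This turns the $H^1$ problem into vanishing of $H^i$, $1\le i\le n+1-a$, for $\bigotimes_\ell\wedge^{a_\ell}\Gamma_{L_\ell}\otimes T_{n,B'-iA}$.
\item The induction on $n$ must therefore carry higher cohomology, products of restricted $\Gamma$-factors, and accumulated point twists and $A$-twists; this is what the admissible tuples track.
\item The constant $(n+1)g$ is what absorbs the up to $n$ twists by $A$, each of degree $g+1$, that this process introduces.
\end{itemize}
Even then, the terminal groups $H^m(C_m,T_{m,M})=\wedge^mH^1(C,M)$, with $M=B'-nL+\sum_\ell(\cdot)P_\ell$, need general flag points. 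They are not controlled when $B'-nL$ is effective and special with $h^0(B'-nL)>\deg B'-(n+1)g-p$. The paper handles that case separately: it uses $K_{p,1}(T_{n,B'};T_{n,L})\cong K_{p,n+1}(T_{n,B'-nL};T_{n,L})$, then Serre duality, Green's vanishing lemma and a binomial estimate. None of this appears in your proposal.

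A minor point on your $n=1$ route: the slope claim is false. Indeed $\mu(\wedge^{p+1}M_L)=-(p+1)\deg L/(\deg L-g)<-2$ for $p\ge1$, and semistability gives nothing at the boundary. For example, with $g=2$, $p=1$, $\deg L=6$, $\deg B'=5$, the Serre-dual bundle $\wedge^2M_L^\vee\otimes K_C\otimes B'^{-1}$ has slope exactly $0$. The base case follows instead from Serre duality and Green's vanishing lemma.
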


\begin{remark}\label{rmk_intro}
When $C\cong\PP^1$, the polarization hypothesis is $\deg(L)\geq1$, as follows
from the genus-zero estimate in the proof of
Lemma~\ref{lem:weight-one-coefficients}. Under the identifications
$C_n\cong\PP^n$ and
$T_{n,\sO_{\PP^1}(d)}\cong\sO_{\PP^n}(d)$, the resulting statement is
precisely Green's vanishing theorem for projective space.
\end{remark}

Taking $B=\sO_C$, Theorem~\ref{thm-main} recovers Green's
$(2g+1+p)$ theorem for property $(N_p)$ when $n=1$.
For higher symmetric products of curves, Theorem~\ref{thm-main} yields
the following property $(N_p)$ statement.
\begin{introcorollary}\label{cor_1}
	When $n\geq2$ and $g\geq1$, the line bundle $T_{n,L}$
	satisfies property $(N_p)$ provided that $\deg(L)\geq (n+1)g+p$.
\end{introcorollary}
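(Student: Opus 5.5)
We derive Corollary~\ref{cor_1} from Theorem~\ref{thm-main} with $B=\sO_C$, so $T_{n,B}=\sO_{C_n}$ and $K_{i,q}(C_n,T_{n,L})=K_{i,q}(C_n,\sO_{C_n};T_{n,L})$. Property $(N_p)$ requires $K_{i,q}(C_n,T_{n,L})=0$ for all $0\le i\le p$ and $q\ge 2$. We check the two hypotheses of Theorem~\ref{thm-main} with $i$ in place of $p$.

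For hypothesis (ii): since $n\ge2$, it only asks for $\deg(L)\ge 2g+1$. From $n\ge 2$ and $p\ge 0$ we get $\deg(L)\ge (n+1)g+p\ge 3g$. When $g\ge 1$, $3g\ge 2g+1$, so (ii) holds. This is the only place $g\ge1$ is used.

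For hypothesis (i): with $\deg(B)=0$ it reads $(q-1)\deg(L)\ge (n+1)g+i$. Since $q\ge2$, we have $(q-1)\deg(L)\ge \deg(L)\ge (n+1)g+p\ge (n+1)g+i$ for every $i\le p$. The first inequality uses $\deg(L)\ge 0$, which holds because $\deg L\ge 3g>0$.

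Theorem~\ref{thm-main} then gives $K_{i,q}(C_n,T_{n,L})=0$ for all $0\le i\le p$ and $q\ge 2$, which is property $(N_p)$. There is no real obstacle here; the only points to watch are that (ii) needs $g\ge1$, and that (i) is monotone in $q$ because $\deg L>0$.
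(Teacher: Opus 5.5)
Your proposal is correct and is essentially the paper's own argument. You specialize Theorem~\ref{thm-main} to $B=\sO_C$, get hypothesis (i) from $(q-1)\deg L\ge\deg L\ge(n+1)g+p\ge(n+1)g+i$, and get hypothesis (ii) from $\deg L\ge(n+1)g+p\ge 2g+1$ when $n\ge2$ and $g\ge1$.
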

\begin{proof}
It is enough to prove that
$K_{i,q}(C_n;T_{n,L})=0$ for $0\leq i\leq p$ and $q\geq2$.
For each such $i$ and $q$, the first numerical hypothesis in
Theorem~\ref{thm-main} follows from
\[
(q-1)\deg(L)\geq\deg(L)\geq(n+1)g+p\geq(n+1)g+i.
\]
If $g\geq1$, $n\geq 2$, then
\[
\deg(L)\geq(n+1)g+p
\geq2g+1.
\]
Thus the second numerical hypothesis also holds.
\end{proof}
\begin{remark}
Suppose that $g=1$ and $n=2$. Then the symmetric square $C_2$ is an
elliptic ruled surface with invariant $e=-1$. Let $[C_0]$ denote the class
of the zero section, where $C_0^2=1$. If $\deg(L)=d$, then
$[T_{2,L}]\equiv d[C_0]$. Combining Corollary~\ref{cor_1} with Park's
necessary criterion~\cite[Theorem~1.1]{Park:RuledSurfacesII}, we conclude
that $T_{2,L}$ satisfies property $(N_p)$ if and only if
$\deg(L)\geq p+3$. Consequently,~\cite[Conjecture~7.3]{GP96} holds for
tautological line bundles on $C_2$, giving a partial confirmation of the
conjecture.
\end{remark}

Corollary~\ref{cor_1} also fits into the broader study of syzygies of adjoint
linear series. Mukai observed that Green's $(2g+1+p)$ theorem can be
reformulated as follows: if $A$ is an ample line bundle on $C$, then
$K_C+mA$ satisfies property $(N_p)$ for every $m\geq p+3$. This perspective
suggests analogous statements in higher dimensions.

For surfaces, Reider's theorem implies that $K_S+mA$ is very ample for
$m\geq4$ when $A$ is ample~\cite{reider-surface}. Motivated by this perspective, Mukai conjectured that $K_S+mA$ should satisfy
property $(N_p)$ for $m\geq p+4$~\cite{ein-lazarsfeld-arbitrary}. In higher dimensions, even Fujita's conjecture remains open in general. It
predicts that $K_X+mA$ is very ample for $m\geq n+2$ on every smooth
projective variety $X$ of dimension $n$. Ein and Lazarsfeld proved that if
$A$ is very ample and $B$ is nef, then $K_X+mA+B$ satisfies property $(N_p)$
for $m\geq n+1+p$~\cite{ein-lazarsfeld-arbitrary}. More recently, Bangere
and Lacini obtained the same bound for $B=\sO_X$ under the weaker assumption
that $A$ is ample and basepoint-free~\cite{bangere-lacini-adjoint}. These
results motivate the following folklore conjecture.
\begin{conjecture}
	Let $X$ be a smooth projective variety of dimension $n$, let $A$ be an
	ample line bundle on $X$, and let $B$ be a nef divisor. Then $K_X+mA+B$
	satisfies property $(N_p)$ for every $m\geq n+2+p$.
\end{conjecture}

Corollary~\ref{cor_1} provides further evidence for this conjecture in the
setting of symmetric products of curves. Recall the finite morphism
$\sigma_n:C_{n-1}\times C\to C_n$, given by
$(\xi,x)\mapsto\xi+x$. Define the divisor $\delta_n$
by $\sO_{C_n}(\delta_n):=
\det\big(\sigma_{n,*}\sO_{C_{n-1}\times C}\big)^*$, where $(\cdot)^*$
denotes dualization. Then the canonical bundle formula is
$K_{C_n}=T_{n,K_C}(-\delta_n)$.

Suppose that $L=K_C+mA+B$, where $A$ is ample. Multiplicativity and the
canonical bundle formula give
\begin{equation*}
	T_{n,L}
	\cong K_{C_n}+ m T_{n,A}+ T_{n,B}(\delta_n).
\end{equation*}
The line bundle $T_{n,A}$ is ample as $A$ is ample. For $g\geq1$,
Proposition~\ref{prop-main} shows that $T_{n,B}(\delta_n)$ is nef if and only
if $\deg(B)\geq(n-1)(g-1)$. If this inequality holds and
$m\geq n+2+p$, then
\[
\deg(L)\geq2g-2+(n+2+p)+(n-1)(g-1)=(n+1)g+p+1.
\]
Thus Corollary~\ref{cor_1} verifies the conjectural adjoint-type vanishing
for these tautological line bundles.

Related questions for the determinant line bundles
$N_{n,L}=T_{n,L}(-\delta_n)$ have been studied by Sheridan.
In his thesis \cite[Chapter~9]{She20}, he formulated a
shifting conjecture for these bundles and explained
its relation to the above conjecture. 
Sheridan subsequently established projective normality results
for determinant line bundles on symmetric squares \cite{She22}.

Theorem~\ref{thm-main} follows from the effective vanishing theorem
for weight-one syzygies below, which is the main technical result of the paper. 

\begin{introthm}\label{thm_weight_one}
Let $C$ be a smooth projective curve of genus $g\geq0$, let $B,L$ be line
bundles on $C$, and let $n\geq1$ and $p\geq0$. Assume
\begin{enumerate}[\normalfont(i)]
\setlength{\itemsep}{0.4\baselineskip}
    \item $\deg(B)\geq(n+1)g+p$;
    \item$\displaystyle
\deg(L)\ge
\begin{cases}
2g+1, & n\ge2 \text{ or } g=0,\\
2g+p+1, & n=1 \text{ and } g\ge1.
\end{cases}$
\end{enumerate}
Then $K_{p,1}\big(C_n,T_{n,B};T_{n,L}\big)=0$.
\end{introthm}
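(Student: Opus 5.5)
We will use the standard kernel bundle description of weight-one Koszul cohomology. Write $V:=H^0(C_n,T_{n,L})$ and let $M_n:=M_{T_{n,L}}$ be the kernel of the evaluation map $V\otimes\sO_{C_n}\to T_{n,L}$. This map is surjective because $\deg L\ge 2g$ implies $T_{n,L}$ is globally generated. By the usual Lazarsfeld argument,
\[
K_{p,1}(C_n,T_{n,B};T_{n,L})=0 \quad\text{provided}\quad H^1\bigl(C_n,\textstyle\bigwedge^{p+1}M_n\otimes T_{n,B}\bigr)=0,
\]
where $\bigwedge^{p+1}M_n\otimes T_{n,B}$ is the weight-$0$ term twisted so as to land in weight one. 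So the whole task is to prove this $H^1$-vanishing, and we proceed by induction on $n$.

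For the base case $n=1$ the statement is classical. We have $M_1=M_L$ on $C$, and $H^1(\bigwedge^{p+1}M_L\otimes B)=0$ when $\deg B\ge 2g+p$ and $\deg L\ge 2g+p+1$. This follows by filtering $M_L$ via $0\to M_{L(-x)}\to M_L\to\sO(-x)\to 0$, or directly from Green's duality / Ein--Lazarsfeld, and it is consistent with the bound $(n+1)g+p=2g+p$.

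For $n\ge2$ we build a filtration of $M_n$ compatible with the finite map $\sigma_n:C_{n-1}\times C\to C_n$. The standard sequence
\[
0\to T_{n,L}\to\sigma_{n,*}\bigl(T_{n-1,L}\boxtimes L\bigr)
\]
and its relatives, together with $H^0(C_n,T_{n,L})=\Sym^n$-invariants, i.e.\ $\bigwedge^n H^0(L)$ up to the appropriate identification, suggest restricting to a fiber-type divisor $X_x:=x+C_{n-1}\subset C_n$ for a general point $x\in C$. On $X_x\cong C_{n-1}$ we have $T_{n,L}|_{X_x}\cong T_{n-1,L}\otimes\sO(\text{stuff from }x)$, more precisely $T_{n-1,L(-x)}$ after accounting for the $\delta$-twist. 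We then compare $M_n|_{X_x}$ with $M_{n-1}$ for $L(-x)$ plus a trivial piece coming from $V\to H^0(X_x,T_{n,L}|_{X_x})$.

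Concretely, we aim for a short exact sequence
\[
0\to M_{n-1,L(-x)}\to M_n|_{X_x}\to W\otimes\sO\to 0
\]
with $W$ a vector space, up to a twist by $\sO(-X_x)|_{X_x}$. Taking wedge powers gives a filtration of $\bigwedge^{p+1}M_n|_{X_x}$ whose graded pieces are $\bigwedge^{i}M_{n-1}\otimes\bigwedge^{p+1-i}W$. The $H^1$ of $\bigwedge^{i}M_{n-1}\otimes T_{n-1,B'}$ is then killed by the inductive hypothesis, applied with $B'$ of degree about $\deg B-g-1\ge ng+(p-1)$ and with $L(-x)$ still of degree $\ge 2g$. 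This is where the shift of $g$ per step in hypothesis (i) is consumed.

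Next we pass from $X_x$ back to $C_n$. Using $0\to\sF(-X_x)\to\sF\to\sF|_{X_x}\to0$ with $\sF=\bigwedge^{p+1}M_n\otimes T_{n,B}$, we note that $\sO(X_x)=T_{n,\sO(x)}$. Replacing $B$ by $B(-x)$ lowers $\deg B$ by one, so we iterate this step about $\deg B$ times. We must end at a twist $T_{n,B''}$ where the vanishing is known directly, for instance via Kodaira/Kawamata–Viehweg-type vanishing on $C_n$ or via pushing forward to $C^n$ and Künneth for very negative or very positive $B''$. Alternatively, we can run the filtration argument along the $\sigma_n$-pushforward instead, writing $\sigma_{n,*}$ of a box product and using the Künneth formula on $C_{n-1}\times C$. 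In that version the curve factor contributes the $M_L$ piece, handled by the $n=1$ case, and the $C_{n-1}$ factor is handled by induction.

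The main obstacle will be setting up the filtration of $M_n$ so that every graded piece is a box-type bundle whose cohomology reduces exactly to the lower symmetric product with the right numerical shifts. The delicate points are the correct twist by $\delta$, and making sure the degree drop is precisely $g$ per step while $\deg L$ only needs $2g+1$ for $n\ge2$. We would first try the $\sigma_n$-pushforward version, since Künneth there makes the genus bookkeeping transparent.
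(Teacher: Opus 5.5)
\textbf{There is a genuine gap in the step $n\geq2$.} Your reduction to $H^1\big(C_n,\bigwedge^{p+1}M_{T_{n,L}}\otimes T_{n,B}\big)=0$ is the paper's, and your $n=1$ argument, which peels points off $L$, is essentially the paper's $n=1$ case. The problems start at $n\geq2$.

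\emph{Wrong identifications.} First, $H^0(C_n,T_{n,L})=S^nH^0(L)$, not $\bigwedge^nH^0(L)$. Second, $T_{n,L}|_{X_x}\cong T_{n-1,L}$ with no twist. The formulas you wrote belong to the determinant bundle $N_{n,L}=T_{n,L}(-\delta_n)$, not to $T_{n,L}$. The correct restriction sequence is $0\to S^nH^0(L(-x))\otimes\sO_{X_x}\to M_{T_{n,L}}|_{X_x}\to M_{T_{n-1,L}}\to0$, and $T_{n,B}|_{X_x}\cong T_{n-1,B}$. So restricting to $X_x$ costs nothing in $\deg B$. Your claimed loss of about $g+1$ at this step has no source, and the mechanism behind the bound $(n+1)g$ is absent from your plan.

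\emph{The descent does not terminate.} The sequence $0\to\sF(-X_x)\to\sF\to\sF|_{X_x}\to0$ only trades the problem for the same problem with $B$ replaced by $B(-x)$, which is strictly harder. Iterating can only lower the coefficient, so a ``very positive'' endpoint is unreachable. The lower endpoints do not help either:
\begin{enumerate}
\item At degree zero the group is nonzero. For $B''=\sO_C$ and $p=0$, one gets $H^1(C_n,M_{T_{n,L}})\cong S^nH^0(L)\otimes H^1(C,\sO_C)\neq0$ when $g\geq1$.
\item For $n\geq2$, $H^1(\sF(-kX_x))=0$ does hold for $k\gg0$. But the chain also needs $H^1(\sF(-kX_x)|_{X_x})=0$ for every intermediate $k$. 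Induction does not give this once $\deg(B-kx)$ falls below $ng+p$. For $n=2$ it is false for $k\gg0$: on $X_x\cong C$ these groups are $H^1$ of a fixed nonzero bundle twisted by $\sO_C(-kx)$.
\end{enumerate}
The $\sigma_n$-alternative is not developed. Moreover, $\sigma_n^*M_{T_{n,L}}=\ker\big(S^nH^0(L)\otimes\sO\to T_{n-1,L}\boxtimes L\big)$ sits in $S^nH^0(L)\otimes\sO$ rather than $S^{n-1}H^0(L)\otimes H^0(L)\otimes\sO$. So it is not a box product, and K\"unneth does not apply directly.

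\textbf{The missing idea is the one you already used for $n=1$: peel general points off the polarization $L$ on $C_n$ itself, not off $B$.}
\begin{enumerate}
\item There is an exact sequence $0\to M_{T_{n,L(-P)}}\to M_{T_{n,L}}\to\Gamma^{(n)}_{L,P}\to0$, where $\Gamma^{(n)}_{L,P}=\ker\big(S^{n-1}H^0(L)\otimes\sO_{C_n}\to i_*T_{n-1,L}\big)$ is an elementary transformation along $X_P$.
\item Consequently $\bigwedge^a\Gamma^{(n)}_{L,P}$ is an extension of $i_*\bigwedge^aM_{T_{n-1,L}}$ by $\bigwedge^aS^{n-1}H^0(L)\otimes\sO_{C_n}(-X_P)$.
\item Repeating this down to a base-point-free pencil $A$ of degree $g+1$ filters $M_{T_{n,L}}$. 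In $\bigwedge^{p+1}$, each nonzero $\Gamma$-factor either costs a single point twist of $B$ or descends to $X_P\cong C_{n-1}$. The total number of point twists is therefore bounded by the exterior degree $p+1$, not by $\deg B$.
\item The last piece $M_{T_{n,A}}$ has rank $n$ and a Koszul resolution by the bundles $T_{n,-jA}$. This converts $H^1$ into $H^i$ of terms twisted by $T_{m,-iA}$.
\item The terminal cases are $H^i(C_m,T_{m,M})=S^{m-i}H^0(M)\otimes\bigwedge^iH^1(M)$, which vanish once $h^1(C,M)\leq i-1$. Pairing each twist by $A$ with one cohomological degree is what produces $(n+1)g$.
\end{enumerate}
You would also need a generic choice of the flag points to control the terminal coefficients. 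Finally, a separate argument (Serre duality plus Green's vanishing lemma) is required when $B-nL$ is effective and special.
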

\begin{remark}
Setting $p=0$ in Theorem~\ref{thm_weight_one} gives $K_{0,1}(C_n,T_{n,B};T_{n,L})=0$, equivalently, the natural map
$H^0(C_n,T_{n,L})\otimes H^0(C_n,T_{n,B})
\to H^0(C_n,T_{n,L+B})$ is surjective,
provided that $\deg(L)\geq 2g+1$,
$\deg(B)\ge(n+1)g$.
This recovers Mumford's
multiplication theorem \cite[Theorem~6]{Mum70} when $n=1$: if
$\deg(L)\ge 2g+1$, $\deg(B)\ge 2g$, then
$H^0(C,L)\otimes H^0(C,B)
\to H^0(C,L+B)$ is surjective.
\end{remark}
\begin{remark}
The asymptotic vanishing of weight-one syzygies on smooth varieties of arbitrary dimension are studied by Ein--Lazarsfeld--Yang \cite[Theorem~A]{ELY16}. More precisely, if $B$ is a $p$-jet very ample line bundle on a smooth projective variety $X$, then $K_{p,1}(X,B;dA+P)=0$ for $d\gg 0$, where $A$ is ample and $P$ is arbitrary. While \cite{ELY16} fixes the coefficient bundle $B$ and takes the polarization $L=dA+P$ sufficiently positive, Theorem~\ref{thm_weight_one} proceeds in an orthogonal direction: for $n\geq2$, we fix $\deg(L)\geq 2g+1$ and obtain vanishing by requiring $\deg(B)\geq (n+1)g+p$.
\end{remark}
The proof extends the basepoint-free pencil trick to symmetric products and
adapts the secant-bundle method of~\cite{ein-lazarsfeld-arbitrary}.
Removing general points from $L$ yields a filtration of $M_{T_{n,L}}$.
Its graded pieces reduce the required cohomology vanishings to divisors $X_P\cong C_{n-1}$, providing the induction on $n$.
We introduce admissible tuples to track the possible induction paths through this
filtration.

The paper is organized as follows. Section~\ref{sec-preliminaries} reviews
Koszul cohomology and the geometry of symmetric products and establishes the
nefness criterion for $T_{n,B}(\delta_n)$. In Section~\ref{sec-syzygy}, we
show that Theorem~\ref{thm-main} follows from
Theorem~\ref{thm_weight_one}, study the positivity of $T_{n,L}$, construct a
filtration of its kernel bundle, and use this filtration to prove
Theorem~\ref{thm_weight_one} by induction on the admissible tuples. Finally,
Section~\ref{sec-questions} discusses further questions and directions for
future research.

\medskip

\noindent {\bf Acknowledgments:} We are grateful to Purna Bangere, Lawrence
Ein, Michael Kemeny, Minjae Kwon, Jinhyung Park, and John Sheridan for valuable discussions and
encouragement. The second author thanks Jinhyung Park and KAIST for their
hospitality during his visit in July 2026. Discussions during
this visit inspired several improvements to the paper.
\section{Preliminaries}\label{sec-preliminaries}

\subsection{Koszul cohomology and syzygies}
If $L$ is globally generated, the evaluation morphism
$\operatorname{ev}:H^0(X,L)\otimes\sO_X\rightarrow L$ is surjective. The
\textit{kernel bundle} $M_L$ is defined as its kernel:
\[
0\longrightarrow M_L\longrightarrow H^0(X,L)\otimes\sO_X
\xrightarrow{\mathrm{ev}}L\longrightarrow0.
\]
We will use the kernel bundle $M_L$ frequently to compute Koszul cohomology.
The following lemmas give useful formulas.
\begin{lemma}[{\cite[Proposition 3.2]{ein-lazarsfeld-asymptotic}}]\label{lemma-kernel-bdl}
Let $B$ and $L$ be line bundles on $X$, and suppose that $L$ is globally
generated. Assume that $H^i(X,B+mL)=0$ for all $i>0$ and $m\geq1$. Then,
for every $p\geq0$ and $q\geq2$,
\[
K_{p,q}(X,B;L)=H^1\left(X,\bigwedge^{p+1}M_L
\otimes\sO_X(B+(q-1)L)\right).
\]
If moreover, $H^1(X,\cO_X(B))=0$, then the same statement holds also when $q = 1$.
\end{lemma}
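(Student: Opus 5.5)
This is the standard kernel-bundle computation of Koszul cohomology; I will derive it from the Koszul complex of the evaluation sequence. Put $V:=H^0(X,L)$ and take exterior powers of $0\to M_L\to V\otimes\sO_X\to L\to0$. Since $L$ is a line bundle, for each $k\geq1$ we get short exact sequences
\[
0\longrightarrow \bigwedge^{k}M_L\longrightarrow \bigwedge^{k}V\otimes\sO_X\longrightarrow \bigwedge^{k-1}M_L\otimes L\longrightarrow 0 .
\]
Twist the sequence with $k=p+1$ by $B+(q-1)L$ and the one with $k=p$ by $B+qL$. Splicing them recovers the Koszul differentials on global sections. Concretely, the map
\[
\bigwedge^{p+1}V\otimes H^0(B+(q-1)L)\to\bigwedge^pV\otimes H^0(B+qL)
\]
factors through $H^0(\bigwedge^pM_L\otimes(B+qL))$. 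Likewise, the kernel of the next Koszul map $\bigwedge^pV\otimes H^0(B+qL)\to\bigwedge^{p-1}V\otimes H^0(B+(q+1)L)$ is exactly $H^0(\bigwedge^pM_L\otimes(B+qL))$, by left exactness of $H^0$ and injectivity of $H^0(\bigwedge^{p-1}M_L\otimes(B+(q+1)L))\to\bigwedge^{p-1}V\otimes H^0(B+(q+1)L)$. It follows that
\[
K_{p,q}(X,B;L)=\coker\Big(\bigwedge^{p+1}V\otimes H^0(B+(q-1)L)\to H^0\big(\bigwedge^pM_L\otimes(B+qL)\big)\Big).
\]
The long exact sequence of the twisted $k=p+1$ sequence then identifies this cokernel with
\[
\ker\Big(H^1\big(\bigwedge^{p+1}M_L\otimes(B+(q-1)L)\big)\to\bigwedge^{p+1}V\otimes H^1(B+(q-1)L)\Big).
\]

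The remaining step, which is the only place the hypotheses enter, is to show $H^1(B+(q-1)L)=0$. Then the kernel above is the whole $H^1$ group, which is the claimed formula. For $q\geq2$ we have $m=q-1\geq1$, so $H^1(B+(q-1)L)=0$ holds by assumption. For $q=1$ the group is $H^1(B)$, which vanishes by the extra hypothesis, and the identical argument gives the statement.

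A remark on what is used: the identification of $K_{p,q}$ with the cokernel needs no vanishing at all, and the kernel identification needs only $H^1(B+(q-1)L)=0$. The hypothesis that $H^i(X,B+mL)=0$ for all $i>0$ is stronger than required here; presumably it is included for uniformity with later iterated uses. There is no real obstacle. The one point to verify carefully is that the composite of the splicing maps agrees, up to sign, with the Koszul differential, which is a standard local computation with the contraction map $\bigwedge^{k}V\to\bigwedge^{k-1}V\otimes L$.
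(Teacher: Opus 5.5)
Your argument is correct and is the standard kernel-bundle derivation behind the cited \cite[Proposition~3.2]{ein-lazarsfeld-asymptotic}; the paper quotes this result without proof. You identify $K_{p,q}(X,B;L)$ with the cokernel of $\bigwedge^{p+1}H^0(L)\otimes H^0(B+(q-1)L)\to H^0(\bigwedge^pM_L\otimes\sO_X(B+qL))$, and the long exact sequence then turns this into $H^1(\bigwedge^{p+1}M_L\otimes\sO_X(B+(q-1)L))$ once $H^1(B+(q-1)L)=0$; your remark that only this single vanishing is needed here is also right.
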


\begin{lemma}[{\cite[Corollary~3.3]{ein-lazarsfeld-asymptotic}}]
\label{cor:el12-asymptotic-syzygies}
Assume the hypotheses of Lemma~\ref{lemma-kernel-bdl}, and set
$n:=\dim X$. Then, for every $p\geq0$ and $q\geq2$,
\[
K_{p,q}(X,B;L)=H^{q-1}\left(X,\bigwedge^{p+q-1}M_L\otimes
\sO_X(B+L)\right).
\]
In particular, $K_{p,n+1}(X,B;L)=H^n(\bigwedge^{p+n}M_L\otimes
\sO_X(B+L))$,
and $K_{p,q}(X,B;L)=0$ for $q>n+1$.
\end{lemma}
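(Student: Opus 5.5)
The approach is to derive Lemma~\ref{cor:el12-asymptotic-syzygies} from Lemma~\ref{lemma-kernel-bdl} by repeatedly shifting cohomological degree along the exterior powers of the kernel sequence. Take exterior powers of
\[
0\to M_L\to H^0(L)\otimes\sO_X\to L\to 0.
\]
Since $L$ is a line bundle, for every $k\geq1$ this gives the short exact sequence
\[
0\to \bigwedge^{k}M_L\to \bigwedge^{k}H^0(L)\otimes\sO_X\to \bigwedge^{k-1}M_L\otimes L\to0 .
\]
Start from Lemma~\ref{lemma-kernel-bdl}, which gives $K_{p,q}(X,B;L)=H^1(\bigwedge^{p+1}M_L\otimes(B+(q-1)L))$ for $q\geq 2$. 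The claim is that for $1\le j\le q-1$ one has
\[
H^j\Big(\bigwedge^{p+j}M_L\otimes(B+(q-j)L)\Big)\cong H^{j+1}\Big(\bigwedge^{p+j+1}M_L\otimes(B+(q-j-1)L)\Big),
\]
as long as $q-j-1\ge1$. Iterating from $j=1$ to $j=q-2$ lands at $H^{q-1}(\bigwedge^{p+q-1}M_L\otimes(B+L))$, which is the formula.

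To get each isomorphism, twist the sequence above with $k=p+j+1$ by $B+(q-j-1)L$. The middle term is then a direct sum of copies of $B+(q-j-1)L$. Its long exact sequence gives
\[
H^j(\text{middle})\to H^j\Big(\bigwedge^{p+j}M_L\otimes(B+(q-j)L)\Big)\to H^{j+1}\Big(\bigwedge^{p+j+1}M_L\otimes(B+(q-j-1)L)\Big)\to H^{j+1}(\text{middle}).
\]
Both outer terms vanish because $q-j-1\ge 1$, $j\geq1$, and the hypothesis gives $H^i(B+mL)=0$ for all $i>0$, $m\ge1$. So the connecting map is an isomorphism. The only care needed is index bookkeeping; there is no real obstacle here.

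For the consequences, the case $q=n+1$ is just the specialization. For $q>n+1$, the formula expresses $K_{p,q}$ as $H^{q-1}$ of a sheaf on $X$. Since $q-1>n=\dim X$, this group vanishes by Grothendieck vanishing. In degenerate cases, for instance when $p+q-1$ exceeds the rank of $M_L$, the exterior power is zero and the conclusion holds trivially.
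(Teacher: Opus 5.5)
Your proof is correct. The paper gives no proof of its own and simply cites \cite[Corollary~3.3]{ein-lazarsfeld-asymptotic}, and your argument is the standard derivation there. You start from Lemma~\ref{lemma-kernel-bdl} and chase the twisted sequences $0\to\bigwedge^{k}M_L\to\bigwedge^{k}H^0(L)\otimes\sO_X\to\bigwedge^{k-1}M_L\otimes L\to0$, whose middle terms have no higher cohomology because $H^i(B+mL)=0$ for $i,m\geq1$; Grothendieck vanishing then gives the case $q>n+1$.
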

We will also use the following form of Green’s vanishing lemma induced from ~\cite[Theorem~3.a.1]{Green:KoszulI}. See also ~\cite[Theorem 5.3.1]{EL26}.
\begin{lemma}
\label{lem:green-vanishing}
Let $L, F$ be line bundles on $X$ and $L$ is globally generated. Then
\[
H^0\left(X,\bigwedge^p M_L\otimes F\right)=0
\]
for every integer $p\geq0$ such that $p\geq h^0(X,F)$.
\end{lemma}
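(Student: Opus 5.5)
The plan is to reduce the statement to a purely linear-algebraic vanishing for Koszul cycles, which is Green's original setting in \cite[Theorem~3.a.1]{Green:KoszulI}. Set $V:=H^0(X,L)$ and $W:=H^0(X,F)$, with $r:=\dim W$. Tensoring the exact sequence $0\to\bigwedge^pM_L\to\bigwedge^pV\otimes\sO_X\to\bigwedge^{p-1}M_L\otimes L\to0$ by $F$ and taking global sections shows that $H^0(\bigwedge^pM_L\otimes F)$ is the kernel of
\[
\bigwedge^pV\otimes W\longrightarrow \bigwedge^{p-1}V\otimes H^0(X,F+L).
\]
This map is the Koszul differential $v_1\wedge\cdots\wedge v_p\otimes w\mapsto\sum_i(-1)^{i}v_1\wedge\cdots\widehat{v_i}\cdots\wedge v_p\otimes v_iw$. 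So it suffices to show that this Koszul differential is injective when $p\ge r$.

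The differential only uses the multiplication map $\mu:V\otimes W\to H^0(F+L)$. Since $X$ is an integral variety, $\mu$ has no nonzero decomposable tensors in its kernel: $\mu(v\otimes w)=vw\neq0$ whenever $v\ne0$ and $w\neq0$. Green's lemma says precisely that for such a "$1$-generic" pairing, the degree-$p$ Koszul cycles in $\bigwedge^pV\otimes W$ vanish once $p\ge\dim W$. The case $r=0$ is trivial, since then $\bigwedge^pV\otimes W=0$. I would prove the general case by induction on $r$.

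For the induction, fix a cycle $\alpha\in\bigwedge^pV\otimes W$ and a basis $w_1,\dots,w_r$ of $W$, and write $\alpha=\sum_j\alpha_j\otimes w_j$. Choose a general hyperplane $W'\subset W$ and a complement $\langle w\rangle$, so that $\alpha=\alpha'+\beta\otimes w$ with $\alpha'\in\bigwedge^pV\otimes W'$. Applying the cycle condition and then contracting by a general linear form on $V$ should show that $\beta$ is divisible by a suitable vector. This reduces the problem to a cycle for the pair $(V,W')$ in degree $p-1\ge r-1$, to which the inductive hypothesis applies.

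An alternative I would keep in reserve works at the generic point $\eta$ of $X$. Trivialize $F_\eta$ and write $w_j=f_je$ with $f_j\in K(X)$ linearly independent over $\Bbbk$. The cycle then becomes the identity $\sum_j f_j\,\iota(\alpha_j)=0$ in $\bigwedge^{p-1}V\otimes K(X)$, where $\iota$ is the map $\bigwedge^pV\to\bigwedge^{p-1}V\otimes L_\eta$. From this one would extract, by $\Bbbk$-linear independence of the $f_j$, that each $\alpha_j$ is a $p$-vector built from at most $r-1$ independent directions modulo the kernel, and hence vanishes when $p\ge r$.

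The main obstacle is this combinatorial core: turning the cycle condition together with $1$-genericity of $\mu$ into the conclusion $\alpha=0$ exactly in the range $p\ge r$. Everything else is formal, namely identifying $H^0$ with Koszul cycles and using integrality of $X$. I would first try the induction-on-$r$ argument with a general hyperplane $W'$, following the presentation in \cite[Theorem~5.3.1]{EL26}. I expect the delicate point there to be arranging the choice of $W'$ and of the contraction so that the dimension drop in $W$ matches the drop in $p$.
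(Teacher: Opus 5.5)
\textbf{There is a genuine gap.} Your identification of $H^0(\bigwedge^pM_L\otimes F)$ with the Koszul cycles in $\bigwedge^pV\otimes W$ is correct. The paper gives no argument of its own and only cites Green and [EL26]. But the core step is left as ``should show'', and the mechanism you describe would not supply it.

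First, keeping only $1$-genericity of $\mu$ replaces the lemma by a much harder statement. For an abstract $1$-generic pairing, vanishing of degree-$p$ cycles for $p\ge\dim W$ is the no-rank-one-relation case of Green's later Linear Syzygy Theorem (the Eisenbud--Koh--Stillman conjecture), not the 1984 lemma.

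Second, a general hyperplane $W'$ together with an unrelated general linear form on $V$ places no constraint on $\beta$. The real input is pointwise. For a general $x\in X$, let $\lambda_x\in V^*$ be evaluation at $x$ (after trivializing $L(x)$), and let $W_x=\{s\in W: s(x)=0\}$. Since $M_L(x)=\ker\lambda_x$, the fiber $\alpha(x)\in\bigwedge^pV\otimes F(x)$ is killed by $\iota_{\lambda_x}$. When $W'=W_x$, this says $\beta\in\bigwedge^p\ker\lambda_x$. That is a support condition, not divisibility by a vector, and it holds only because the hyperplane and the linear form come from the same point $x$.

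\textbf{How to close it.} With that coupling, the induction on $r$ goes through, provided you state it for arbitrary subspaces $W\subseteq H^0(X,F)$; this is needed because $W_x$ is not a complete linear system.
\begin{enumerate}
\item Write $\delta=\sum_k\iota_{e_k^*}\otimes e_k$, where $e_k$ is a basis of $V$ acting on the second factor by multiplication. Contraction $\iota_{\lambda_x}$ on the $\bigwedge V$ factor anticommutes with $\delta$, so $\iota_{\lambda_x}\alpha$ is again a cycle.
\item Its image in $\bigwedge^{p-1}V\otimes F(x)$ is $\iota_{\lambda_x}(\alpha(x))=0$, so $\iota_{\lambda_x}\alpha\in\bigwedge^{p-1}V\otimes W_x$.
\item Since $\dim W_x=r-1\le p-1$, the inductive hypothesis gives $\iota_{\lambda_x}\alpha=0$.
\item This does not yet give $\alpha=0$, and your sketch omits this step. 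Let $x$ range over a dense open set. Because $X$ is integral, no nonzero section of $L$ vanishes on that set, so the forms $\lambda_x$ span $V^*$. Hence $\iota_\lambda\alpha=0$ for every $\lambda\in V^*$, and $\alpha=0$ since $p\ge1$.
\end{enumerate}

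Your generic-point alternative has the same defect. $\Bbbk$-linear independence of the $f_j$ alone does not justify the claim that each $\alpha_j$ involves at most $r-1$ directions.
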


\subsection{Lemmas on vector bundles} We first record how exterior powers behave on an elementary modification of vector bundles along a divisor.

\begin{lemma}\label{lem-wedge-coker}
Let $D$ be an effective integral divisor on a smooth variety $X$. Let $F$ be
a locally free sheaf of rank $n$ on $X$, and let $Q$ be a locally free
$\sO_D$-module of rank one. Suppose that $E$ is an \emph{elementary
transformation} of $F$ along $Q$; equivalently, there is a short exact
sequence
\[
0\longrightarrow E\xrightarrow{\;\,\varphi\;\,}F\longrightarrow i_*Q\longrightarrow0,
\]
where $i:D\hookrightarrow X$ is the inclusion. Let
$M=\ker(F|_D\to Q)$. Then, for every $r\geq1$, there is a short exact
sequence
\[
0\longrightarrow\wedge^r E\xrightarrow{\,\wedge^r\varphi\,}
\wedge^r F
\longrightarrow i_*\left(\wedge^{r-1}M\otimes Q\right)\longrightarrow0.
\]
There is also another exterior-power short exact sequence
\begin{equation}\label{eqn-wedge-coker}
0\longrightarrow (\wedge^r F)(-D)\longrightarrow\wedge^r E
\longrightarrow i_*\wedge^r M\longrightarrow0.
\end{equation}
\end{lemma}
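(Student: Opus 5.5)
The statement is local on $X$, so I will first reduce to a computation in a local frame and then glue. Since $Q$ is a line bundle on $D$ and $F\to i_*Q$ is surjective, near any point $x\in D$ I choose a local frame $e_1,\dots,e_n$ of $F$ such that $F|_D\to Q$ sends $e_1|_D$ to a generator of $Q$ and $e_2,\dots,e_n$ to zero. This is possible because $F|_D\to Q$ is a surjection of locally free $\sO_D$-modules, so locally it is projection onto one coordinate, and the frame lifts from $D$ to $X$. Let $f$ be a local equation of $D$ (Cartier, since $X$ is smooth). Then $E=\ker(F\to i_*Q)$ is locally free with frame $fe_1,e_2,\dots,e_n$. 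Also $M=\ker(F|_D\to Q)$ is locally free on $D$ of rank $n-1$ with frame $\bar e_2,\dots,\bar e_n$.

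For the first sequence I would proceed as follows. The map $\wedge^r\varphi$ is injective because it is an isomorphism away from $D$ between torsion-free sheaves. In the local frame, $\wedge^rE$ is spanned by the $e_I$ with $1\notin I$ together with $f\,e_1\wedge e_J$ for $|J|=r-1$, $1\notin J$. So the cokernel is $\bigoplus_J \sO_D\cdot \overline{e_1\wedge e_J}$, which is locally $\wedge^{r-1}M\otimes Q$. To make this global and frame-independent I define the map
\[
\wedge^rF\to \wedge^r(F|_D)\to \wedge^{r-1}M\otimes Q
\]
via the natural filtration of $\wedge^r(F|_D)$ induced by $0\to M\to F|_D\to Q\to 0$. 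Its top graded quotient is $\wedge^{r-1}M\otimes Q$, and the next piece is $\wedge^rM$. The local computation then shows that the kernel of this map is exactly the image of $\wedge^rE$.

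For the second sequence, the composite $F(-D)\hookrightarrow E\hookrightarrow F$ is multiplication by $f$; this holds because $fF\subset E$ in the local frame. Hence there is an induced map $(\wedge^rF)(-D)\to\wedge^rE$. Its cokernel is $\wedge^rE|_D$ modulo the image of $(\wedge^rF)(-D)|_D$. The key observation is that $E|_D$ fits into
\[
0\to Q(-D)\to E|_D\to M\to0,
\]
where the image of $F(-D)|_D$ in $E|_D$ equals $\mathrm{span}(\overline{fe_1})\cong Q(-D)$ plus nothing more. I need to check this carefully: the image of $f e_i$ in $E|_D$ is zero for $i\ge2$, since $fe_i\in fE$, while $f\cdot fe_1$ maps to zero but $fe_1$ itself is the image of $e_1\otimes f$. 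So the image of $\wedge^r(F(-D))\to \wedge^rE|_D$ is spanned by the wedges containing $\overline{fe_1}$, and the quotient is $\wedge^rM$, with frame $\bar e_I$ for $1\notin I$. Injectivity again follows from torsion-freeness. So the cokernel is $i_*\wedge^rM$, globally via $\wedge^rE\to\wedge^r(E|_D)\to\wedge^rM$.

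I expect the main obstacle to be bookkeeping rather than anything conceptual: the maps must be defined intrinsically (through $E|_D\to M$ and $F|_D\to Q$) so that the local frame computations glue. I would present the intrinsic maps first and then verify exactness in the adapted frame.
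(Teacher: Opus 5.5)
Your proof is correct, but it takes a different route from the paper's. You verify both sequences by explicit computation in a frame $e_1,\dots,e_n$ of $F$ adapted to the surjection $F|_D\to Q$. In that frame $E$ has frame $fe_1,e_2,\dots,e_n$, so exactness of your intrinsically defined maps becomes a local check.

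The paper instead forms $Q_r=\coker(\wedge^r\varphi)$ and builds the same surjection $f_r\colon Q_r\to i_*(\wedge^{r-1}M\otimes Q)$. To show $f_r$ is an isomorphism, it argues as follows:
\begin{itemize}
\item $Q_r$ has a two-term locally free resolution, so it is Cohen--Macaulay of pure codimension one.
\item Since $D$ is integral, the only associated point of $Q_r$ is the generic point $\eta$ of $D$.
\item At $\eta$, a Smith normal form computation over the DVR $\sO_{X,\eta}$ finishes the argument.
\end{itemize}
The paper then deduces the second sequence from the first by the snake lemma against $0\to(\wedge^rF)(-D)\to\wedge^rF\to i_*\wedge^r(F|_D)\to0$, rather than by a separate analysis of $E|_D$ as you do.

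What each route buys: yours is more elementary and more general. It never uses that $D$ is integral or that $X$ is smooth; it only needs $D$ to be an effective Cartier divisor. The paper needs integrality to have a single associated point, and smoothness for the depth count and the DVR. In exchange, the paper only computes at the generic point of $D$, and the snake lemma produces the map $(\wedge^rF)(-D)\to\wedge^rE$ for free.

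One step of yours should be rephrased. The inclusion $F(-D)\subset E$ does not by functoriality induce $(\wedge^rF)(-D)\to\wedge^rE$. It induces $\wedge^r(F(-D))=(\wedge^rF)(-rD)\to\wedge^rE$. Moreover, the image of $\wedge^r(F(-D))$ in $\wedge^r(E|_D)$ is zero for $r\geq2$, because $F(-D)|_D$ has rank-one image $\sO_D\cdot\overline{fe_1}$ in $E|_D$.

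What you actually need is $f\cdot\wedge^rF\subset\wedge^rE$. This follows at once from your first sequence, whose cokernel is killed by $f$. It also follows from the frame: $fe_I\in f\wedge^rE$ when $1\notin I$, and $fe_1\wedge e_J$ is a frame element of $\wedge^rE$. Read this way, your description of the image as the span of the $\overline{fe_1}\wedge\bar e_J$, and of the quotient as $\wedge^rM$, is correct.
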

\begin{proof}
We may assume $1\leq r\leq n$, since all the sheaves in question vanish
for $r>n$. Define
\[
    Q_r:=\coker\left(\wedge^r E\xrightarrow{\,\wedge^r\varphi\,}\wedge^r F\right).\]
 The morphism $\varphi$ is an isomorphism away from $D$; so is $\wedge^r\varphi$. Thus $Q_r$ is supported on $D$. Moreover,
\[
\operatorname{Fitt}_0(i_*Q)=\sO_X(-D),
\qquad
\det(E)\otimes\det(F)^{-1}\cong\sO_X(-D).
\]

Restricting the defining elementary transformation to $D$ gives the exterior-power sequence
\begin{equation}\label{eq:wedge-on-D}
0\longrightarrow\wedge^r M
\longrightarrow\wedge^r(F|_D)
\longrightarrow\wedge^{r-1}M\otimes Q
\longrightarrow0.
\end{equation}
The image of $E|_D\to F|_D$ lies in $M$. Consequently, the composite
\[
\wedge^r E\longrightarrow\wedge^r F
\longrightarrow i_*\wedge^r(F|_D)
\longrightarrow i_*\left(\wedge^{r-1}M\otimes Q\right)
\]
vanishes. It therefore induces a surjection
\begin{equation}\label{eq:Qr-to-wedge-M}
f_r:Q_r\longrightarrow
i_*\left(\wedge^{r-1}M\otimes Q\right).
\end{equation}

We show that $f_r$ is an isomorphism. Since $X$ is smooth,
$\wedge^rE$ and $\wedge^rF$ are locally free over regular local
rings. The two-term locally free resolution defining $Q_r$, together with
the fact that its support is the divisor $D$, shows that $Q_r$ is a
Cohen--Macaulay sheaf of pure codimension one. Because $D$ is integral, its
only associated point is the generic point $\eta$ of $D$.

Let $t$ be a uniformizer of the discrete valuation ring $\sO_{X,\eta}$.
The quotient $Q_\eta$ has length one, so Smith normal form gives bases of
$E_\eta$ and $F_\eta$ in which
$\varphi_\eta=\operatorname{diag}(t,1,\ldots,1)$.
If $e_1,\ldots,e_n$ is the resulting basis of $F_\eta$, then the cokernel
of $\wedge^r\varphi_\eta$ is generated over
$\sO_{X,\eta}/(t)$ by the classes
\[
e_1\wedge e_{i_2}\wedge\cdots\wedge e_{i_r},
\qquad 2\leq i_2<\cdots<i_r\leq n.
\]
Thus $(Q_r)_\eta$ is free of rank $\binom{n-1}{r-1}$ over
$\sO_{D,\eta}$. Under~\eqref{eq:Qr-to-wedge-M}, these generators map to
the corresponding basis of
$\wedge^{r-1}M_\eta\otimes Q_\eta$. Hence $(f_r)_\eta$ is an
isomorphism. The kernel of $f_r$ has zero localization at the only
associated point of $Q_r$, and therefore it is zero. This proves the first
exact sequence.

For the second sequence, combine the first one with the restriction
sequence for $\wedge^rF$:
\[
\begin{tikzcd}[row sep=small]
0 \arrow[r] & (\wedge^rF)(-D) \arrow[r] \arrow[d]
& \wedge^rF \arrow[r] \arrow[d,equal]
& i_*\wedge^r(F|_D) \arrow[r] \arrow[d]
& 0 \\
0 \arrow[r] & \wedge^rE \arrow[r]
& \wedge^rF \arrow[r]
& i_*\left(\wedge^{r-1}M\otimes Q\right) \arrow[r]
& 0.
\end{tikzcd}
\]
The right vertical arrow is the quotient map in~\eqref{eq:wedge-on-D}; its
kernel is $i_*\wedge^rM$. The snake lemma gives
\[
0\longrightarrow(\wedge^rF)(-D)
\longrightarrow\wedge^rE
\longrightarrow i_*\wedge^rM
\longrightarrow0,
\]
which is~\eqref{eqn-wedge-coker}.
\end{proof}

We conclude this subsection with the following standard consequence of the
exterior-power filtration associated to a short exact sequence.
\begin{lemma}\label{lem-vanishing-wedge}
Let $0\to E_1\to E_2\to E_3\to0$ be a short exact sequence of vector
bundles, and let $\sF$ be a coherent sheaf on a variety $X$. Fix integers $i,w\geq 0$. Then $H^i(X,\sF\otimes\bigwedge^wE_2)=0$ if
\[
H^i\left(X,\sF\otimes\bigwedge^kE_1\otimes
\bigwedge^{w-k}E_3\right)=0
\qquad(0\leq k\leq w).
\]
\end{lemma}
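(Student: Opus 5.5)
The plan is to use the standard filtration of $\bigwedge^w E_2$ induced by the subbundle $E_1\subseteq E_2$, and then induct along that filtration using long exact sequences in cohomology. Set $r_1=\operatorname{rank}E_1$. For $0\le k\le w$, let $G^k\subseteq\bigwedge^wE_2$ be the image of
\[
\bigwedge^kE_1\otimes\bigwedge^{w-k}E_2\longrightarrow\bigwedge^wE_2 .
\]
These subsheaves are locally split subbundles. They form a decreasing filtration
\[
\bigwedge^wE_2=G^0\supseteq G^1\supseteq\cdots\supseteq G^{w}\supseteq G^{w+1}=0 .
\]
Its graded pieces are
\[
G^k/G^{k+1}\cong\bigwedge^kE_1\otimes\bigwedge^{w-k}E_3 .
\]
I would check this first, working locally where the sequence splits as $E_2\cong E_1\oplus E_3$. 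There $\bigwedge^wE_2=\bigoplus_k\bigwedge^kE_1\otimes\bigwedge^{w-k}E_3$, and $G^k$ is the sum of the summands with index at least $k$. The identification of the quotient is canonical: send $a\wedge b$, with $a\in\bigwedge^kE_1$ and $b\in\bigwedge^{w-k}E_2$, to $a\otimes\bar b$. So the local isomorphisms glue to a global one.

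Next, all sheaves in the filtration are locally free. Hence tensoring with the coherent sheaf $\sF$ keeps every short exact sequence
\[
0\to G^{k+1}\to G^k\to \bigwedge^kE_1\otimes\bigwedge^{w-k}E_3\to0
\]
exact, because each term is a flat $\sO_X$-module.

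Then I would run a descending induction on $k$, starting at $k=w+1$, to prove that $H^i(X,\sF\otimes G^k)=0$. The base case is trivial, since $G^{w+1}=0$. For the inductive step, take the long exact sequence of cohomology of the tensored sequence. It contains the segment
\[
H^i(\sF\otimes G^{k+1})\to H^i(\sF\otimes G^k)\to H^i\Bigl(\sF\otimes\bigwedge^kE_1\otimes\bigwedge^{w-k}E_3\Bigr).
\]
The left term vanishes by induction and the right term vanishes by hypothesis, so the middle term vanishes. Reaching $k=0$ gives $H^i(X,\sF\otimes\bigwedge^wE_2)=0$. Indices with $k>r_1$ or $w-k>\operatorname{rank}E_3$ give zero pieces and cause no trouble.

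The only step that needs any care is the construction of the filtration and its graded pieces. This is standard multilinear algebra, since locally split exact sequences of vector bundles yield these filtrations functorially. I expect no real obstacle.
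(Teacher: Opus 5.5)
Your proposal is correct and follows the paper's proof: the paper also takes the standard filtration of $\bigwedge^w E_2$ with graded pieces $\bigwedge^k E_1\otimes\bigwedge^{w-k}E_3$ (citing Hartshorne, Exercise II.5.16(d)) and runs the long exact cohomology sequences down from $F^{w+1}=0$. Your local-splitting check of the graded pieces and your flatness remark, which ensures exactness after tensoring with $\sF$, fill in details the paper leaves implicit.
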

\begin{proof}
There is a filtration
\[
\bigwedge^wE_2=F^0\supseteq F^1\supseteq\cdots
\supseteq F^w\supseteq F^{w+1}=0,\;\textrm{ with }F^k/F^{k+1}\cong\bigwedge^kE_1\otimes\bigwedge^{w-k}E_3
\]
for $0\leq k\leq w$; see, for example,
\cite[Chapter II, Exercise 5.16(d)]{hartshorne}. Starting with
$F^{w+1}=0$, the long exact sequences of the filtration give the assertion.
\end{proof}
\begin{remark}\label{rmk:01}
More generally, let
$M=F^0\supseteq F^1\supseteq\cdots\supseteq F^r=0$ be a filtration by
subbundles. Iterating the lemma shows that
$H^i(X,\sF\otimes\bigwedge^wM)=0$ provided
\[
H^i\left(X,\sF\otimes
\bigotimes_{\ell=0}^{r-1}
\bigwedge^{a_\ell}(F^\ell/F^{\ell+1})\right)=0
\]
for every $a_0,\ldots,a_{r-1}\geq0$ satisfying
$\sum_{\ell=0}^{r-1}a_\ell=w$.
\end{remark}

\subsection{Symmetric products of curves and tautological line bundles}
Let $C$ be a smooth projective curve of genus $g\geq0$. For $n\geq0$, let
$C_n$ denote its $n$-th symmetric product. We use the conventions
$C_0=\operatorname{Spec}k$ and $T_{0,L}=\sO_{C_0}$. For $n\geq1$, there is
a natural quotient map $q_n:C^n\longrightarrow C_n$. The addition morphism
\[
\sigma_n:C_{n-1}\times C\longrightarrow C_n,
\qquad (\xi,x)\longmapsto \xi+x,
\]
is a finite flat surjective morphism of degree $n$.

\begin{remark}
    We recall the following properties of these line bundles from
    \cite[Definition~3.2 and Remark~3.3]{ENP}:
    \begin{enumerate}[\normalfont(i)]
	    \item \label{rem-T_k-coh} Let $L$ be a line bundle on a curve $C$. For integers $n\geq 1$ and $0\leq i\leq n$, we have
       $$H^i(C_n,T_{n,L})=S^{n-i}\,H^0(C,L)\otimes \wedge^i H^1(C,L);\;$$
        \item $q_n^*\,\sO_{C_n}(\delta_n) = \sO_{C^n}(\Delta_n)$, where $\Delta_{u,v}:=\{(x_1,\cdots,x_{n})\in C^n \,|\,x_u=x_v\}$ is the pairwise diagonal on $C^n$ and $\Delta_n:=\sum_{1\leq u<v\leq n} \Delta_{u,v}$. When $n=1$, we set $\Delta_1=\varnothing$.
        \item For any two line bundles $L_1$ and $L_2$ on $C$, we have $T_{n,L_1}+ T_{n,L_2}=T_{n,L_1+ L_2}$.
        \item Given a point $P\in C$, an effective divisor $X_P$ on $C_n$ is defined to be the image of the morphism $C_{n-1} \to C_n$ sending $\xi$ to $\xi+P$. It is ample and the associated line bundle $\sO_{C_n}(X_P)=T_{n,\sO_C(P)}$. For any line bundle $L$ on $C$, we have $T_{n,L}|_{X_P}=T_{n-1,L}$.
        \item The canonical bundle of $C_n$ is given by $K_{C_n}=T_{n,K_C}(-\delta_n)$.
    \end{enumerate}
\end{remark}
\subsection{Nefness of divisors on symmetric products of curves}
We now establish the sharp nefness criterion for the residual term
$T_{n,B}(\delta_n)$ appearing in the decomposition used in the Introduction.
A related observation appears without proof in~\cite{Ein:Gonality}; here we
give a complete proof.
\begin{proposition}\label{prop-main}
Assume $g\geq1$. Then $T_{n,B}(\delta_n)$ is nef if and only if
$\deg(B)\geq (n-1)(g-1)$.
\end{proposition}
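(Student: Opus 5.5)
Both directions can be handled by pulling back along finite maps, since nefness is detected after finite surjective pullback. For necessity, I would exhibit a curve on which $T_{n,B}(\delta_n)$ has negative degree when $\deg(B)<(n-1)(g-1)$. For sufficiency, I would write the bundle as a sum of pieces that are visibly nef.

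\emph{Necessity.} The diagonal-type curves $\{(n-1)P+x\}$ are not the right test curves, since $\delta_n$ is positive on them. Instead I would use a curve inside a fibre of the Abel--Jacobi map. Take $n\ge2$ and a general point $\xi\in C_{n-1}$, and consider the curve $\Gamma_\xi:=\{\xi+x : x\in C\}\subset C_n$, i.e.\ the image of $\sigma_n(\xi,\cdot)$. Its intersections with the natural classes can be computed directly.

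\begin{enumerate}[\normalfont(a)]
\item $T_{n,B}\cdot\Gamma_\xi=\deg B$, by pulling back to $C$.
\item $\delta_n\cdot\Gamma_\xi=2(n-1)$, because the map meets the diagonal exactly at the points $x\in\operatorname{supp}\xi$, each counted twice. This value is positive, so $\Gamma_\xi$ gives no constraint and these curves are too naive.
\end{enumerate}

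The correct test curves should be the rational curves in fibres $|D|\cong\PP^r$ of $u:C_n\to J(C)$, or curves on which $\theta$ pulled back is zero. The relevant identity is the class relation $\delta_n\equiv (n+g-1)x-\theta$, with $x=X_P$ and $T_{n,B}\equiv (\deg B)\,x$. This gives
\[
T_{n,B}(\delta_n)\equiv(\deg B+n+g-1)\,x-\theta .
\]

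So necessity reduces to showing that $(\deg B+n+g-1)x-\theta$ fails to be nef when $\deg B+n+g-1<(n-1)(g-1)+n+g-1=n g$. Equivalently, $t x-\theta$ is nef iff $t\ge ng$. I would take the curve $\Gamma=\{\xi+x\}$ for $\xi$ general. On this curve $x\cdot\Gamma=1$, and $\theta\cdot\Gamma=g$ since $C\to J$ is an Abel--Jacobi embedding. This only gives $t\ge g$, so a better curve is needed.

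I expect the optimal choice to be the diagonal curve $\Delta=\{n x : x\in C\}$. On it $x\cdot\Delta=n$, and $\theta\cdot\Delta=n^2 g$ because $u$ restricts to multiplication by $n$, which scales $\theta$ by $n^2$. Then $(t x-\theta)\cdot\Delta=n t-n^2g\ge0$ exactly when $t\ge ng$. This yields necessity.

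\emph{Sufficiency.} I need $ng\,x-\theta$ to be nef on $C_n$. I would pull back to $C^n$ via $q_n$. There $x$ pulls back to $\sum_i f_i$, with $f_i$ the fibre class of the $i$-th factor. The class $\theta$ pulls back to $g\sum_i f_i+\sum_{i<j}\Delta_{ij}$ minus the sum of Künneth correction terms, in the standard Kouvidakis form: $q_n^*\theta\equiv (n-1)\ldots$. I would instead use Kouvidakis' criterion directly, which pulls $\theta$ back under the diagonal-type maps.

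A cleaner route is to write $ng\,x-\theta=\frac1{?}$ times a combination of $\delta_n$ and pullbacks. Note that $T_{n,B}(\delta_n)$ pulls back on $C^n$ to $B^{\boxtimes n}(\sum_{i<j}\Delta_{ij})$. Each $\Delta_{ij}$ is the pullback of the diagonal of $C\times C$ under a projection. So it suffices to show that on $C\times C$ the class $a(f_1+f_2)+\Delta$ is nef for $a=\deg B/(n-1)\ge g-1$. To apply this, I would split $B^{\boxtimes n}$ into the $\binom n2$ pairs, giving each pair the factor $\frac{\deg B}{n-1}$ from each coordinate. On $C\times C$, the class $(g-1)(f_1+f_2)+\Delta$ is nef, because its self-intersection is $2(g-1)^2+2\cdot2(g-1)+(2-2g)=2g(g-1)\ge0$ and it meets $\Delta$ in $2(g-1)+2-2g=0$. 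Since $\Delta$ is the only curve with negative self-intersection among the relevant classes, nefness follows from the standard fact that a class $D$ with $D\cdot\Delta\ge0$ and $D$ effective plus $\Delta$ is nef.

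The main obstacle is this last step on $C\times C$. I would argue that $(g-1)(f_1+f_2)+\Delta$ is effective, which requires some care, or else use the known nef cone of $C\times C$ along this slice.
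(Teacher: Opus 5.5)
Your proof is correct, and for sufficiency it takes a genuinely different route from the paper. For necessity you use the same test curve, the image of the small diagonal. You evaluate it through $\delta_n\equiv(n+g-1)x-\theta$ and $[n]^*\theta\equiv n^2\theta$, whereas the paper computes $D_n\cdot C_\delta$ directly on $C^n$ by adjunction and so avoids importing the theta-class formula.

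For sufficiency, the paper inducts on $n$. It splits curves $E\subseteq C^n$ according to whether they lie in a coordinate fibre, and bounds $E\cdot D_n$ using the degrees $d_i$. You instead write
\[
D_n\equiv\sum_{i<j}\pi_{ij}^*\bigl(a(f_1+f_2)+\Delta\bigr),\qquad a=\deg(B)/(n-1),
\]
which reduces everything to one nefness statement on $C\times C$. This needs no induction and no case analysis, since by the projection formula $\pi_{ij*}E$ is either zero or an effective curve. The paper's identity $(n-1)\sum d_i=\sum_{i<j}(d_i+d_j)$ is exactly your pairwise split, carried out curve by curve. Your decomposition also gives necessity at once: each $\pi_{ij}$ maps $C_\delta$ isomorphically onto $\Delta$, so $D_n\cdot C_\delta=\binom{n}{2}(2a+2-2g)$, which is the paper's formula.

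A few corrections are needed. What you call the main obstacle is immediate, but your justification is off.
\begin{itemize}
\item The relevant fact is ``nef plus $\Delta$'', not ``effective plus $\Delta$''; a merely effective part could contain other negative curves. The self-intersection computation plays no role, and effectivity needs no care.
\item Since $g\geq1$ and $a\geq g-1\geq0$, the class $a(f_1+f_2)$ is nef. Hence $E\cdot\bigl(a(f_1+f_2)+\Delta\bigr)\geq0$ for every irreducible $E\neq\Delta$, while on $\Delta$ the degree is $2a+2-2g\geq0$.
\item Your value in (b) should be $\delta_n\cdot\Gamma_\xi=n-1$. You counted against the full diagonal divisor, whose class is $2\delta_n$. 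This is harmless, since you discard that curve.
\item Treat $n=1$ separately, since $a$ is undefined there.
\item Delete the abandoned fragments: the Kouvidakis sentence and ``$\frac{1}{?}$''.
\end{itemize}
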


\begin{proof}
    The case $n=1$ is immediate. Assume $n\geq2$ and set
    \[
        D_n:=q_n^*\big(T_{n,B}(\delta_n)\big)
        \cong B^{\boxtimes n}\left(\sum_{1\leq i<j\leq n}\Delta_{i,j}\right).
    \]
    Since $q_n$ is finite and surjective, $T_{n,B}(\delta_n)$ is nef if and
    only if $D_n$ is nef. Let $C_{\delta}\subseteq C^n$ be the small
    diagonal. For $\Delta=\Delta_{1,2}$, adjunction gives
    \[
        \sO_{C^n}(\Delta)|_{\Delta}
        \cong K_C^{\vee}\boxtimes\sO_C^{\boxtimes n-2},
        \qquad
        \sO_{C^n}(\Delta)|_{C_{\delta}}\cong K_C^{\vee}.
    \]
    Hence
    \[
        D_n\cdot C_{\delta}
        =n\,\deg(B)+\frac{n(n-1)}2(2-2g).
    \]
    Nefness therefore requires $\deg(B)\geq(n-1)(g-1)$.

    Conversely, assume this inequality and argue by induction on $n$, the
    case $n=1$ being immediate.
    Let $E\subseteq C^n$ be an irreducible curve, and compute intersections
    after passing to its normalization. Let $F_i\cong C^{n-1}$ be a fiber of
    the projection $p_i:C^{n}\to C$.

    \smallskip
    \noindent\emph{Case 1: $E$ lies in a coordinate fiber.}
    Say $E\subseteq F_1$.
    \[
        D_n|_{F_1}\cong
        B^{\boxtimes(n-1)}\left(\sum_{2\leq i<j\leq n}\overline{\Delta}_{i,j}\right)
        \otimes\sO_{F_1}\left(\sum_{i=2}^n\overline{F}_i\right).
    \]
    Here $\overline{\Delta}_{i,j}$ is the corresponding diagonal in
    $F_1\cong C^{n-1}$ and $\overline{F}_i$ is the coordinate fiber obtained
    by imposing $x_i=x_1$.

    Since $g\geq1$, the assumption
    $\deg(B)\geq(n-1)(g-1)$ implies
    $\deg(B)\geq(n-2)(g-1)$. The first factor is therefore nef by induction,
    while each coordinate fiber is nef. Hence $E\cdot D_n\geq0$.

    \smallskip
    \noindent\emph{Case 2: $E$ lies in no coordinate fiber.}
    Set $d_i:=\deg(p_i|_E)\geq1$. Then
    $E\cdot p_i^*B=\deg(B)d_i$. If $E\not\subseteq\Delta_{i,j}$, then
    $E\cdot\Delta_{i,j}\geq0$. If $E\subseteq\Delta_{i,j}$, then
    $d_i=d_j=:d_{i,j}$, and adjunction gives
    \[
        E\cdot\Delta_{i,j}=d_{i,j}(2-2g).
    \]
    Writing $S=\{(i,j)\mid E\subseteq\Delta_{i,j}, 1\leq i<j\leq n\}$, we obtain
    \begin{align*}
        E\cdot D_n
        &\geq \deg(B)\sum_{1\leq i\leq n} d_i +(2-2g)\sum_{(i,j)\in S}d_{i,j}
		\geq \deg(B)\sum_{i=1}^n d_i
        +(1-g)\sum_{1\leq i<j\leq n}(d_i+d_j)\\
        &\geq (g-1)\left((n-1)\sum_{i=1}^n d_i
        -\sum_{1\leq i<j\leq n}(d_i+d_j)\right)=0,
    \end{align*}
    where the first inequality holds because $E$ has nonnegative
    intersection with every pairwise diagonal that does not contain it. The
    second uses $g\geq1$ and $d_i+d_j=2d_{i,j}$ for $(i,j)\in S$, while the
    last uses $\deg(B)\geq (n-1)(g-1)$ together with
    \[
    (n-1)\sum_{i=1}^n d_i
    =\sum_{1\leq i<j\leq n}(d_i+d_j).
    \]
    Therefore, $D_n$, and hence $T_{n,B}(\delta_n)$, is nef.
\end{proof}

\section{Syzygies of tautological line bundles}\label{sec-syzygy}

We first show that Theorem~\ref{thm-main} follows formally from the
weight-one vanishing in Theorem~\ref{thm_weight_one}.

\begin{proof}[Proof of Theorem~\ref{thm-main} assuming Theorem~\ref{thm_weight_one}]
Set $B_q:=B+(q-1)L$. By the definition of Koszul cohomology, shifting the coefficient line bundle
by $(q-1)L$ identifies
\[
K_{p,q}\big(C_n,T_{n,B};T_{n,L}\big)
=K_{p,1}\big(C_n,T_{n,B_q};T_{n,L}\big).
\]
The first hypothesis of Theorem~\ref{thm-main} says precisely that
\[
\deg(B_q)=(q-1)\deg(L)+\deg(B)\geq(n+1)g+p.
\]
The second hypothesis is exactly the corresponding hypothesis on $L$ in
Theorem~\ref{thm_weight_one}. Applying Theorem~\ref{thm_weight_one} yields
$K_{p,1}\big(C_n,T_{n,B_q};T_{n,L}\big)=0$, and hence
$K_{p,q}\big(C_n,T_{n,B};T_{n,L}\big)=0$.
\end{proof}

It remains to prove Theorem~\ref{thm_weight_one}, which establishes the
effective vanishing of weight-one syzygies for tautological bundles on
$C_n$. We begin by recording basic positivity and vanishing results.

\subsection{\texorpdfstring{$p$}{p}-very ampleness}
\begin{lemma}\label{lem-vanishing-T}
    Let $L$ be a line bundle on a smooth projective curve $C$ of genus
    $g\geq0$. If $\deg(L)\geq \max\{0,2g-1\}$, then
    $H^i(C_n,mT_{n,L})=0$ for all $i,m>0$.
\end{lemma}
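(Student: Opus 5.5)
Since $mT_{n,L}=T_{n,mL}$ by multiplicativity, it suffices to show $H^i(C_n,T_{n,L'})=0$ for all $i>0$, where $L'=mL$ and $m\geq1$. Item~(\ref{rem-T_k-coh}) of the remark on tautological bundles gives, for $1\leq i\leq n$,
\[
H^i(C_n,T_{n,L'})=S^{n-i}H^0(C,L')\otimes\wedge^iH^1(C,L').
\]
For $i\geq1$ this space vanishes as soon as $H^1(C,L')=0$, because then $\wedge^iH^1(C,L')=0$. For $i>n=\dim C_n$ the vanishing is automatic. The whole problem therefore reduces to showing $H^1(C,mL)=0$ for every $m\geq1$.

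I would treat the cases by genus. If $g\geq1$, then $\deg(mL)\geq m(2g-1)\geq 2g-1>2g-2$, so $H^1(C,mL)=H^0(C,K_C-mL)^\vee=0$ because $\deg(K_C-mL)<0$. If $g=0$, the hypothesis is $\deg L\geq0$, so $\deg(mL)\geq0>-2$ and $H^1(\PP^1,\sO(md))=0$. For $g\geq1$ with $m=1$ the bound $\deg L\geq 2g-1$ is exactly what is needed. For larger $m$ there is only more room, since $2g-1\geq1$ makes $\deg(mL)$ increase with $m$.

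The only step needing care is the formula in item~(\ref{rem-T_k-coh}), which I take from \cite{ENP}. It holds for every line bundle $L$ on $C$, with no positivity assumption, so it applies directly to $mL$. If I wanted to avoid citing it, I would use $H^i(C_n,T_{n,L'})=H^i(C^n,L'^{\boxtimes n})^{\mathfrak S_n}$, which holds because $q_n$ is finite and we are in characteristic zero. The K\"unneth formula then gives the same vanishing: every degree-$i$ summand with $i\geq1$ contains a tensor factor $H^1(C,L')=0$.
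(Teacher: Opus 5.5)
Your proof is correct and follows the paper's argument. You reduce via $mT_{n,L}=T_{n,mL}$ and the tautological cohomology formula to $H^1(C,mL)=0$, which holds because $\deg(mL)\geq\max\{0,2g-1\}$, and you handle $i>n$ by dimension, exactly as the paper does.
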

\begin{proof}
Since $m\geq1$,
$\deg(mL)\geq\max\{0,2g-1\}$, and hence $H^1(C,mL)=0$. For $1\leq i\leq n$,
\[
H^i(C_n,T_{n,mL})
=S^{n-i}H^0(C,mL)\otimes \wedge^i H^1(C,mL)=0.
\]
The assertion for $i>n$ follows from $\dim C_n=n$.
\end{proof}
\begin{definition}
Let $X$ be a projective variety and let $p\geq0$. A line bundle $L$ on $X$
is \emph{$p$-very ample} if, for every zero-dimensional subscheme
$\xi\subseteq X$ of length $p+1$, the restriction map
$H^0(X,L)\longrightarrow H^0(\xi,L|_{\xi})$
is surjective.
\end{definition}
\begin{proposition}\label{prop-p-very-ample}
Let $L$ be a $p$-very ample line bundle on a smooth projective curve $C$,
where $p\geq0$. Then $T_{n,L}$ is $p$-very ample on $C_n$ for every
$n\geq1$.
\end{proposition}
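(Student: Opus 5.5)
The strategy is to pull everything back to the product $C^n$, where $L^{\boxtimes n}$ is easy to control, and then descend via invariants.

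Let $\xi\subseteq C_n$ be a zero-dimensional subscheme of length $p+1$. We may treat each connected component separately as long as the lengths add up correctly. The precise reduction is to show the restriction map is surjective onto $\bigoplus_j H^0(\xi_j,T_{n,L}|_{\xi_j})$. This follows if $H^0(C_n,T_{n,L}\otimes\sI_\xi)\to\bigoplus$ is handled one component at a time.

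A cleaner route avoids components altogether. Use the identification
\[
H^0(C_n,T_{n,L})=H^0(C^n,L^{\boxtimes n})^{\mathfrak S_n}=S^nH^0(C,L),
\]
together with the scheme-theoretic preimage $\tilde\xi:=q_n^{-1}(\xi)\subseteq C^n$, which is $\mathfrak S_n$-stable. Since $q_n$ is finite and flat ($C_n$ is smooth and $C^n$ is Cohen--Macaulay), $q_{n*}\sO_{C^n}$ is locally free, and taking $\mathfrak S_n$-invariants is exact in characteristic zero. Hence
\[
H^0(\xi,T_{n,L}|_\xi)=H^0(\tilde\xi,L^{\boxtimes n}|_{\tilde\xi})^{\mathfrak S_n}.
\]
By exactness of invariants, it then suffices to show that
\[
H^0(C^n,L^{\boxtimes n})\to H^0(\tilde\xi,L^{\boxtimes n}|_{\tilde\xi})
\]
is surjective. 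However, $\tilde\xi$ has length $n!(p+1)$, so $p$-very ampleness of $L^{\boxtimes n}$ alone does not suffice. The better observation is that $\tilde\xi$ is contained in a product subscheme $Z^n$, where $Z\subseteq C$ is a divisor of degree $p+1$. Since $H^0(C^n,L^{\boxtimes n})=H^0(L)^{\otimes n}$ surjects onto $H^0(Z,L|_Z)^{\otimes n}=H^0(Z^n,L^{\boxtimes n}|_{Z^n})$ by Künneth, restricting further to $\tilde\xi\subseteq Z^n$ remains surjective.

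So the key step, and the main obstacle, is to find a divisor $Z$ of degree at most $p+1$ on $C$ with $\tilde\xi\subseteq Z^n$. Equivalently, every function in the ideal $\sI_Z$ pulled back from any factor should vanish on $\tilde\xi$. First, take the support: $\xi$ is supported on points $D_1,\dots,D_k\in C_n$ with multiplicities $\ell_j$ and $\sum\ell_j=p+1$. Let $Z$ be the divisor $\sum_j\sum_{x}$ (appropriate multiplicity)$\cdot x$ over the points $x$ in the supports of the $D_j$. The count must come out to total degree at most $p+1$. This is delicate: a single reduced point $D=x_1+\dots+x_n$ with distinct $x_i$ has length $1$, yet its preimage involves $n$ points of $C$, so the naive $Z$ has degree $n$, not $1$.

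The fix is to use the divisor $X_P$-type structure instead of a product. Since $T_{n,L}$ is $\mathfrak S_n$-descended, the relevant ideal is generated by the symmetric functions rather than by each factor separately. One therefore argues directly on $C_n$: given $\xi$ of length $p+1$, write $Z$ for the minimal divisor on $C$ "spanned" by $\xi$. Here a length-$\ell$ curvilinear subscheme supported at $D$ should map into the span of $\ell$ points with multiplicity. The target estimate is $\deg Z\le p+1$ in the Zariski-tangent sense, using the universal-family description $C_n=\operatorname{Hilb}^n(C)$ and the fact that length-$(p+1)$ subschemes of $C_n$ correspond to families of $n$-point subschemes of $C$ over a length-$(p+1)$ base. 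The trick is to choose, inside $S^nH^0(L)$, products $s_1\cdots s_n$ where $s_1,\dots,s_{n-1}$ are fixed sections nonvanishing on the supports and only one factor varies, interpolating by $p$-very ampleness of $L$ on a degree-$(p+1)$ divisor. This is where I expect the real work to lie: verifying via the nonvanishing factors that these products separate the $p+1$ conditions, possibly through induction on $n$ using $T_{n,L}|_{X_P}=T_{n-1,L}$.
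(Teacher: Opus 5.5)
There is a genuine gap: no step of the proposal establishes surjectivity of $S^nH^0(C,L)\to H^0(\xi,T_{n,L}|_{\xi})$.

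Your descent step is correct. In characteristic zero the Reynolds operator gives $H^0(\xi,T_{n,L}|_\xi)=H^0(\tilde\xi,L^{\boxtimes n}|_{\tilde\xi})^{\mathfrak S_n}$. But the sufficient condition you then aim for, surjectivity of $H^0(C^n,L^{\boxtimes n})\to H^0(\tilde\xi,L^{\boxtimes n}|_{\tilde\xi})$, is false in general. So the obstruction is not just the choice of $Z$.

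Already for $p=0$ and $n=2$ it fails. Let $L$ be globally generated but not separate two distinct points $x\neq x'$ (e.g.\ the $g^1_2$ of a hyperelliptic curve with $x'=\iota(x)$), and let $\xi$ be the reduced point $x+x'\in C_2$. Then $\mathrm{ev}_{x'}$ is proportional to $\mathrm{ev}_x$ on $H^0(L)$. Hence the image of $H^0(L)^{\otimes 2}$ in $H^0(\tilde\xi,L^{\boxtimes 2}|_{\tilde\xi})\cong\Bbbk^2$ is only a line, although $T_{2,L}$ is globally generated. You have to work with the invariants $S^nH^0(L)$ throughout.

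Your proposed fix is not an argument either. You vary one factor of $s_1\cdots s_{n-1}s$ and interpolate on a degree-$(p+1)$ divisor. But the restriction of such a product to $\xi$ depends on $s$ through its values and jets at all points in the supports of the $D_j$, a divisor of degree up to $n(p+1)$. It can also involve higher order than $\operatorname{length}(\xi)$. Take $n=2$, local coordinates $e_1=u_1+u_2$, $e_2=u_1u_2$ at $2x$, and $\xi=V(e_1,e_2^2)$. Write $s/t=a+bu+cu^2+\cdots$. In the trivialization $t^2$, the section $ts$ restricts to $2a-2ce_2$. This sees $L|_{3x}$, so $1$-very ampleness of $L$ does not directly apply.

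The induction you mention in your last sentence is what works, and it needs no divisor $Z$. This is the paper's proof:
\begin{enumerate}
\item Choose $P\in C$ with $X_P$ meeting $\operatorname{Supp}\xi$. Put $y=\xi\cap X_P$, and let $x$ be the residual scheme, $\sI_x=(\sI_\xi:\sI_{X_P})$.
\item The residual sequence gives an exact row $0\to H^0(T_{n,L(-P)}|_x)\to H^0(T_{n,L}|_\xi)\to H^0(T_{n-1,L}|_y)\to 0$.
\item This row receives restriction maps from the exact row $0\to S^nH^0(L(-P))\to S^nH^0(L)\to S^{n-1}H^0(L)\to0$. Its last map is contraction with $\mathrm{ev}_P$, which is surjective in characteristic zero.
\item Since $\operatorname{length}(y)\le p+1$, induction on $n$ makes the right vertical map surjective.
\item Since $y\neq\varnothing$, $\operatorname{length}(x)\le p$, and $L(-P)$ is $(p-1)$-very ample. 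So induction on $p$ handles the left vertical map; when $p=0$, $x=\varnothing$.
\item The snake lemma gives surjectivity in the middle.
\end{enumerate}
Each step peels off one point of $C$: the kernel side lowers $p$ and the restriction side lowers $n$. This is exactly how the argument avoids having to find a degree-$(p+1)$ divisor on $C$ spanned by $\xi$.
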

\begin{proof}
We use induction on $n$, with a secondary induction on $p$. The case $n=1$
is immediate because $T_{1,L}=L$. Let $n\geq2$, and let $z\subseteq C_n$
have length $p+1$. We must prove that
\[
r_{z,n,L}:H^0(C_n,T_{n,L})\longrightarrow H^0(z,T_{n,L}|_z)
\]
is surjective.

Choose $P\in C$ such that $X_P$ meets the support of $z$. Let
$y=z\cap X_P$ be the scheme-theoretic intersection and let $x$ be the
residual subscheme, defined by $\sI_x=(\sI_z:\sI_{X_P})$. The residual
sequence, together with
$T_{n,L}(-X_P)\cong T_{n,L(-P)}$ and
$T_{n,L}|_{X_P}\cong T_{n-1,L}$, gives the commutative diagram
\[
\begin{tikzcd}
    0 \arrow[r] & H^0(  T_{n,L}(-X_P)) \arrow[r] \arrow[d, "r_{x,n,L(-P)}"] & H^0( T_{n,L}) \arrow[r] \arrow[d, "r_{z,n,L}"] & H^0(  T_{n,L}|_{X_P}) \arrow[r] \arrow[d, "r_{y,n-1,L}"] & 0 \\
    0 \arrow[r] & H^0(  T_{n,L}(-X_P)|_x) \arrow[r] & H^0( T_{n,L}|_z) \arrow[r] & H^0( T_{n-1,L}|_y) \arrow[r] & 0
\end{tikzcd}
\]
The top row is exact because it identifies with
\begin{equation}\label{eqn_symmetric_restriction}
0\longrightarrow S^n H^0(L(-P)) \longrightarrow S^nH^0(L) \longrightarrow S^{n-1} H^0(L) \longrightarrow 0,
\end{equation}
where the last nonzero map is contraction by evaluation at $P$ in characteristic zero.

Since $\operatorname{length}(y)\leq p+1$, induction on $n$ shows that
$r_{y,n-1,L}$ is surjective. Then the surjectivity of $r_{z,n,L}$ follows from surjectivity of $r_{x,n,L(-P)}$. If $x=\varnothing$, it
is trivial. Otherwise, by our choice of $X_P$, $y\neq\varnothing$, so
$\operatorname{length}(x)\leq p$. Moreover, the $p$-very ampleness of $L$
implies that $L(-P)$ is $(p-1)$-very ample. Induction on $p$ shows that $r_{x,n,L(-P)}$ is surjective.
\end{proof}

\subsection{Filtration of the kernel bundle \texorpdfstring{$M_{T_{n,L}}$}{MT(n,L)}}\label{sec-filtration}
If $L$ is globally generated on $C$, then Proposition~\ref{prop-p-very-ample}
shows that $T_{n,L}$ is globally generated on $C_n$. Its kernel bundle
$M_{T_{n,L}}$ is defined as follows
$$0\longrightarrow M_{T_{n,L}}\longrightarrow S^nH^0(L)\otimes \sO_{C_n}\longrightarrow T_{n,L}\longrightarrow 0.$$
Let $P\in C$. It determines an effective divisor $X_P\cong C_{n-1}$, the
image of the embedding $C_{n-1}\hookrightarrow C_n$ sending $\xi$ to
$\xi+P$. The following lemma describes the restriction of $M_{T_{n,L}}$ to
$X_P$.
\begin{lemma}\label{lm:01} Under the above assumption, one has a short exact sequence on $X_P$:
	$$0\longrightarrow S^nH^0(L(-P))\otimes\sO_{X_P}\longrightarrow M_{T_{n,L}}|_{X_P}\longrightarrow M_{T_{n-1,L}}\longrightarrow 0$$
\end{lemma}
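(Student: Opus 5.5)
Restrict the defining sequence of $M_{T_{n,L}}$ to $X_P$ and compare it with the defining sequence of $M_{T_{n-1,L}}$ on $X_P\cong C_{n-1}$. Since $T_{n,L}$ is locally free, restricting
\[
0\to M_{T_{n,L}}\to S^nH^0(L)\otimes\sO_{C_n}\to T_{n,L}\to0
\]
to $X_P$ stays exact, because $\mathcal{T}or_1(T_{n,L},\sO_{X_P})=0$. Using $T_{n,L}|_{X_P}\cong T_{n-1,L}$, this gives
\[
0\to M_{T_{n,L}}|_{X_P}\to S^nH^0(L)\otimes\sO_{X_P}\to T_{n-1,L}\to0.
\]
Here I assume $L$ is globally generated, so both kernel bundles are defined via Proposition~\ref{prop-p-very-ample}.

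The key claim is that the evaluation map $S^nH^0(L)\otimes\sO_{X_P}\to T_{n-1,L}$ factors as
\[
S^nH^0(L)\otimes\sO_{X_P}\xrightarrow{\ \rho\otimes 1\ } S^{n-1}H^0(L)\otimes\sO_{X_P}\xrightarrow{\ \mathrm{ev}\ } T_{n-1,L},
\]
where $\rho$ is the surjection in \eqref{eqn_symmetric_restriction}, namely contraction by evaluation at $P$. I would justify this through the restriction map $H^0(C_n,T_{n,L})\to H^0(X_P,T_{n,L}|_{X_P})=H^0(C_{n-1},T_{n-1,L})$. This map is exactly $\rho$, by the identification already used in the proof of Proposition~\ref{prop-p-very-ample}. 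Evaluating a section at a point $\xi+P\in X_P$ is the same as restricting it to $X_P$ and then evaluating at $\xi$. This compatibility, that the restriction map is $\rho$ under $H^0(C_n,T_{n,L})=S^nH^0(L)$, is the one point needing care; I would check it by pulling back to $C^n$, where $q_n^*T_{n,L}=L^{\boxtimes n}$ and $X_P$ pulls back to $\bigcup_i\{x_i=P\}$. On the component $\{x_n=P\}\cong C^{n-1}$, a symmetric tensor restricts to its contraction with $\mathrm{ev}_P$ in the last slot, up to the characteristic-zero normalization.

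Granting the factorization, I apply the snake lemma to the map of short exact sequences
\[
0\to \ker(\rho)\otimes\sO_{X_P}\to S^nH^0(L)\otimes\sO_{X_P}\to S^{n-1}H^0(L)\otimes\sO_{X_P}\to0
\]
together with the composite evaluation. Because $\rho$ is surjective, the kernel of the composite $\mathrm{ev}\circ(\rho\otimes1)$ is an extension of $\ker(\mathrm{ev})=M_{T_{n-1,L}}$ by $\ker(\rho)\otimes\sO_{X_P}$. Concretely, this gives
\[
0\to\ker(\rho)\otimes\sO_{X_P}\to M_{T_{n,L}}|_{X_P}\to M_{T_{n-1,L}}\to0 .
\]
Finally, exactness of \eqref{eqn_symmetric_restriction} identifies $\ker(\rho)=S^nH^0(L(-P))$, which yields the asserted sequence.
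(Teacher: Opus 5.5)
Your proposal is correct and follows essentially the same route as the paper: restrict the evaluation sequence of $M_{T_{n,L}}$ to $X_P$, factor the evaluation map through the contraction $S^nH^0(L)\to S^{n-1}H^0(L)$, and read off the kernel sequence (the paper presents this as a $3\times 3$ commutative diagram whose left column is the desired sequence). Your explicit justification of the compatibility between restriction to $X_P$ and contraction at $P$, checked on $C^n$, fills in a step the paper leaves implicit in its diagram.
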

\begin{proof} As $L$ is globally generated, $T_{n,L}$ and $T_{n-1,L}$ are globally generated. Restricting the defining sequence of $M_{T_{n,L}}$ to $X_P$ gives rise to the following commutative diagram
	\[
	\begin{tikzcd}
		& 0 \arrow[d]           & 0 \arrow[d]           &           &  \\
		     & S^nH^0(L(-P))\otimes \sO_{X_P} \arrow[r,equal ] \arrow[d] &  S^nH^0(L(-P))\otimes \sO_{X_P} \ar[d]  &    &   \\
		0 \arrow[r] & M_{T_{n,L}}|_{X_P} \arrow[r] \arrow[d] & S^nH^0(L)\otimes \sO_{X_P}  \arrow[r] \arrow[d] & T_{n,L}|_{X_P}  \arrow[r] \arrow[d,equal] & 0 \\
		0 \arrow[r] & M_{T_{n-1,L}}\arrow[r] \arrow[d] & S^{n-1}H^0(L)\otimes \sO_{X_P}  \arrow[r] \arrow[d] & T_{n-1,L}\arrow[r]  & 0 \\
		& 0                     & 0                     &                     &
	\end{tikzcd}
	\]
The left-hand side column gives the desired exact sequence.
\end{proof}

 Recall that $\sO_{C_n}(-X_P)\cong T_{n,\sO_C(-P)}$ and
$T_{n,L}|_{X_P}\cong T_{n-1,L}$. Regarding $T_{n-1,L}$ as a sheaf on
$C_n$ via the inclusion $i:X_P\hookrightarrow C_n$, its evaluation map is
surjective. Define $\Gamma^{(n)}_{L,P}$ by the exact sequence
 \begin{equation}\label{eqn-gamma-defn}
		0\longrightarrow \Gamma^{(n)}_{L,P} \longrightarrow S^{n-1} H^0(L)\otimes \sO_{C_n} \longrightarrow i_* T_{n-1,L} \longrightarrow 0.
 \end{equation}
Restricting $T_{n,L}$ to $X_P$ gives an exact sequence on $C_n$:
\begin{equation}
0\longrightarrow T_{n,L(-P)}\longrightarrow T_{n,L}
\longrightarrow i_*T_{n-1,L}\longrightarrow0.
\end{equation}
The corresponding kernel bundles are related by $\Gamma_{L,P}^{(n)}$ as follows.
\begin{lemma}\label{lm:02}
Assume that $L$ and $L(-P)$ are globally generated. Then we have a short exact sequence
\[
0\longrightarrow M_{T_{n,L(-P)}}\longrightarrow M_{T_{n,L}}
\longrightarrow\Gamma^{(n)}_{L,P}\longrightarrow0
\]
\end{lemma}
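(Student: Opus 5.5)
We prove the statement with a $3\times3$ diagram chase, comparing the defining sequences of $M_{T_{n,L(-P)}}$, $M_{T_{n,L}}$ and $\Gamma^{(n)}_{L,P}$.

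First, the hypotheses make everything well defined. Since $L$ and $L(-P)$ are globally generated, the bundles $T_{n,L}$, $T_{n,L(-P)}$ and $T_{n-1,L}$ are globally generated by Proposition~\ref{prop-p-very-ample} with $p=0$. Hence the three kernel sheaves exist. Next, place the three evaluation sequences as the rows of a commutative diagram on $C_n$:
\[
\begin{tikzcd}[column sep=small]
0\arrow[r] & M_{T_{n,L(-P)}}\arrow[r]\arrow[d] & S^nH^0(L(-P))\otimes\sO_{C_n}\arrow[r]\arrow[d] & T_{n,L(-P)}\arrow[r]\arrow[d] & 0\\
0\arrow[r] & M_{T_{n,L}}\arrow[r]\arrow[d] & S^nH^0(L)\otimes\sO_{C_n}\arrow[r]\arrow[d] & T_{n,L}\arrow[r]\arrow[d] & 0\\
0\arrow[r] & \Gamma^{(n)}_{L,P}\arrow[r] & S^{n-1}H^0(L)\otimes\sO_{C_n}\arrow[r] & i_*T_{n-1,L}\arrow[r] & 0
\end{tikzcd}
\]
In this diagram:
\begin{enumerate}[\normalfont(a)]
\item The middle column is the exact sequence \eqref{eqn_symmetric_restriction} tensored with $\sO_{C_n}$. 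Its first map is induced by the inclusion $H^0(L(-P))\subseteq H^0(L)$, and the second is contraction by evaluation at $P$.
\item The right column is the restriction sequence $0\to T_{n,L(-P)}\to T_{n,L}\to i_*T_{n-1,L}\to0$, using $T_{n,L}(-X_P)\cong T_{n,L(-P)}$.
\item The bottom row is \eqref{eqn-gamma-defn}.
\end{enumerate}

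The step that needs care is commutativity of the two right-hand squares. The upper square commutes because $S^nH^0(L(-P))\subseteq S^nH^0(L)=H^0(T_{n,L})$ is exactly the subspace of sections vanishing on $X_P$, so its evaluation factors through $T_{n,L}(-X_P)$. The lower square says that restricting a section of $T_{n,L}$ to $X_P\cong C_{n-1}$ and evaluating gives the same result as first applying the contraction $S^nH^0(L)\to S^{n-1}H^0(L)$ and then evaluating on $C_{n-1}$. This is the identification of $H^0(T_{n,L})\to H^0(T_{n,L}|_{X_P})$ with the contraction, already used in proving Proposition~\ref{prop-p-very-ample}. One can check it on $C^n$: pull back $s_1\cdots s_n$ and restrict to $x_n=P$. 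If this identification involves a nonzero scalar such as $1/n$, it does not affect the argument.

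With both squares commuting, the left-hand maps are induced, and their composite $M_{T_{n,L(-P)}}\to M_{T_{n,L}}\to\Gamma^{(n)}_{L,P}$ is zero. The nine lemma then finishes the proof. The middle and right columns are exact, and all three rows are exact. Therefore the left column $0\to M_{T_{n,L(-P)}}\to M_{T_{n,L}}\to\Gamma^{(n)}_{L,P}\to0$ is exact.

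For the reader who prefers an explicit check: injectivity on the left is clear, since $M_{T_{n,L(-P)}}$ is a subsheaf of $S^nH^0(L(-P))\otimes\sO$, which injects into $S^nH^0(L)\otimes\sO$. Surjectivity onto $\Gamma^{(n)}_{L,P}$ follows from the snake lemma applied to the middle and bottom rows. The vertical maps between them are surjective on the middle and right terms, and the kernels of those vertical maps are the top row, which is exact. The snake lemma yields $0\to M_{T_{n,L(-P)}}\to M_{T_{n,L}}\to\Gamma^{(n)}_{L,P}\to \coker=0$.
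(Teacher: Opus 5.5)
Your proposal is correct and is essentially the paper's proof. The paper places the same three evaluation sequences, as columns rather than rows, against the sequence \eqref{eqn_symmetric_restriction} and the restriction sequence $0\to T_{n,L(-P)}\to T_{n,L}\to i_*T_{n-1,L}\to0$, and obtains the exact sequence of kernel bundles by the snake lemma; your checks of the two commuting squares and of surjectivity onto $\Gamma^{(n)}_{L,P}$ just make explicit what the paper leaves implicit.
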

\begin{proof}
The evaluation sequences and the restriction sequence fit into the
commutative diagram
\[
\begin{tikzcd}[column sep=small, row sep=small]
            & 0 \arrow[d]           & 0 \arrow[d]           & 0 \arrow[d]           &  \\
0 \arrow[r] & M_{T_{n,L(-P)}} \arrow[r] \arrow[d] &  M_{T_{n,L}} \arrow[r] \arrow[d] & \Gamma^{(n)}_{L,P}  \arrow[r] \arrow[d] & 0  \\
0 \arrow[r] & S^nH^0(L(-P))\otimes \sO_{C_n} \arrow[r] \arrow[d] & S^nH^0(L)\otimes \sO_{C_n}  \arrow[r] \arrow[d] & S^{n-1} H^0(L)\otimes \sO_{C_n}  \arrow[r] \arrow[d] & 0 \\
0 \arrow[r] & T_{n,L(-P)} \arrow[r] \arrow[d] & T_{n,L} \arrow[r] \arrow[d] & T_{n-1,L}\arrow[r] \arrow[d] & 0 \\
            & 0                     & 0                     & 0                     &
\end{tikzcd}
\]
The snake lemma gives the exact top row.
\end{proof}

We next restrict $\Gamma_{L,P}^{(n)}$ to another divisor $X_{P'}$.
\begin{lemma}
Assume $n\geq2$ and that $L$ and $L(-P)$ are globally generated. If
$P'=P$, then there is an exact sequence
\begin{equation}\label{eqn-diagram-gamma-rest-factor}
0\longrightarrow T_{n-1,L(-P)}
\longrightarrow\Gamma^{(n)}_{L,P}|_{X_P}
\longrightarrow M_{T_{n-1,L}}
\longrightarrow0,
\end{equation}
whereas, if $P'\neq P$, there is an exact sequence
\begin{equation}\label{eqn-gamma-rest-middle}
0\longrightarrow S^{n-1}H^0(L(-P'))\otimes\sO_{C_{n-1}}
\longrightarrow\Gamma_{L,P}^{(n)}|_{X_{P'}}
\longrightarrow\Gamma_{L,P}^{(n-1)}
\longrightarrow0.
\end{equation}
\end{lemma}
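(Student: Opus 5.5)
We restrict the defining sequence \eqref{eqn-gamma-defn} of $\Gamma^{(n)}_{L,P}$ to $X_{P'}$ and compare it with a known evaluation sequence there, in the same way as in Lemma~\ref{lm:01}. Since $\Gamma^{(n)}_{L,P}$ is locally free, restricting \eqref{eqn-gamma-defn} to $X_{P'}$ gives an exact sequence
\[
0\to\mathcal{T}or_1^{\sO_{C_n}}(i_*T_{n-1,L},\sO_{X_{P'}})\to\Gamma^{(n)}_{L,P}|_{X_{P'}}\to S^{n-1}H^0(L)\otimes\sO_{X_{P'}}\to (i_*T_{n-1,L})|_{X_{P'}}\to0 .
\]
The Tor term is computed from $0\to\sO_{C_n}(-X_{P'})\to\sO_{C_n}\to\sO_{X_{P'}}\to0$.

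\emph{Case $P'=P$.} Here $(i_*T_{n-1,L})|_{X_P}=T_{n-1,L}$, and the map from $S^{n-1}H^0(L)\otimes\sO_{X_P}$ is the evaluation map, with kernel $M_{T_{n-1,L}}$. The Tor term is $T_{n-1,L}\otimes\sO_{C_n}(-X_P)|_{X_P}$. Using $\sO_{C_n}(-X_P)\cong T_{n,\sO_C(-P)}$ and $T_{n,\cdot}|_{X_P}\cong T_{n-1,\cdot}$, it becomes $T_{n-1,L}\otimes T_{n-1,\sO_C(-P)}=T_{n-1,L(-P)}$ by multiplicativity. The four-term sequence then splits into \eqref{eqn-diagram-gamma-rest-factor}. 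As a sanity check, the rank of $\Gamma^{(n)}_{L,P}$ is $\dim S^{n-1}H^0(L)$, and $1+\operatorname{rank}M_{T_{n-1,L}}$ equals this.

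\emph{Case $P'\neq P$.} Now $X_P$ and $X_{P'}$ share no component, so the Tor term vanishes and $\Gamma^{(n)}_{L,P}|_{X_{P'}}$ is the kernel of
\[
S^{n-1}H^0(L)\otimes\sO_{X_{P'}}\to (i_*T_{n-1,L})|_{X_{P'}}.
\]
The key geometric input is that under $X_{P'}\cong C_{n-1}$ (via $\xi\mapsto\xi+P'$) the intersection $X_P\cap X_{P'}$ is the divisor $X_P^{(n-1)}\subset C_{n-1}$, namely $\xi$ with $\xi+P'\ni P$, i.e.\ $\xi\ni P$. I would verify this scheme-theoretically on $C^{n}$, where $X_P$ pulls back to $\sum_k p_k^*P$ and the pieces meet the slice $x_n=P'$ transversally. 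On $X_P\cap X_{P'}\cong C_{n-2}$ the line bundle $T_{n-1,L}$ restricts to $T_{n-2,L}$.

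Next I identify the surjection. It should factor as
\[
S^{n-1}H^0(L)\twoheadrightarrow S^{n-2}H^0(L)\to T_{n-2,L},
\]
where the first arrow is contraction with evaluation at $P'$ as in \eqref{eqn_symmetric_restriction}. I expect this compatibility of the evaluation maps to be the main obstacle. I would check it on $C^{n}$ by writing a section as a symmetrized tensor and evaluating at points $(\ldots,P,\ldots,P')$. Granting this, the kernel of the contraction is $S^{n-1}H^0(L(-P'))\otimes\sO$, by the exactness of \eqref{eqn_symmetric_restriction} in degree $n-1$.

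The kernel of $S^{n-2}H^0(L)\otimes\sO_{C_{n-1}}\to i_*T_{n-2,L}$ is by definition $\Gamma^{(n-1)}_{L,P}$. The snake lemma applied to the two-row diagram, exactly as in Lemma~\ref{lm:02}, then yields \eqref{eqn-gamma-rest-middle}.
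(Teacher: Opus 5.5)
Your proposal is correct and follows the paper's proof essentially step for step. You restrict \eqref{eqn-gamma-defn} to $X_{P'}$ and identify the $\operatorname{Tor}_1$ term as $T_{n-1,L(-P)}$ when $P'=P$ and as zero when $P'\neq P$; in the second case you compare with the defining sequence of $\Gamma^{(n-1)}_{L,P}$ through the contraction sequence \eqref{eqn_symmetric_restriction} in degree $n-1$ and apply the snake lemma, which is the paper's diagram. The two points you single out, the transverse intersection $X_P\cap X_{P'}\cong C_{n-2}$ and the compatibility of evaluation with contraction at $P'$, are exactly what the paper uses without comment when it writes down that commutative diagram.
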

\begin{proof}
Restricting (\ref{eqn-gamma-defn}) to $X_{P'}$ gives two different results, according to whether $P'=P$ or $P'\neq P$. The distinction arises from the restriction of $T_{n-1,L}$ to $X_{P'}$:
\[
T_{n-1,L}\otimes_{\sO_{C_n}}\sO_{X_{P'}}
\cong
\begin{cases}
T_{n-1,L},&P'=P,\\
T_{n-2,L},&P'\neq P.
\end{cases}
\]

If $P'=P$, the derived restriction gives
\begin{equation*}
0\longrightarrow
\operatorname{Tor}_1^{\sO_{C_n}}(T_{n-1,L},\sO_{X_P})
\longrightarrow\Gamma^{(n)}_{L,P}|_{X_P}
\longrightarrow S^{n-1}H^0(L)\otimes\sO_{C_{n-1}}
\longrightarrow T_{n-1,L}\longrightarrow0.
\end{equation*}
The identification
$\operatorname{Tor}_1^{\sO_{C_n}}(T_{n-1,L},\sO_{X_P})
\cong T_{n-1,L}\otimes\sO_{X_P}(-X_P)
\cong T_{n-1,L(-P)}$
shows that the image of
$\Gamma^{(n)}_{L,P}|_{X_P}$ in
$S^{n-1}H^0(L)\otimes\sO_{C_{n-1}}$ is precisely
$M_{T_{n-1,L}}$, proving \eqref{eqn-diagram-gamma-rest-factor}.

If $P'\neq P$, the divisors $X_P$ and $X_{P'}$ have no common component,
so there is no $\operatorname{Tor}_1$ term. Comparing the restricted
sequence with the definition of $\Gamma_{L,P}^{(n-1)}$ gives the commutative
diagram
\[
		\begin{tikzcd}[column sep=small, row sep=small]
			& 0 \arrow[d]           & 0 \arrow[d]           &           &  \\
			     & S^{n-1}H^0(L(-P'))\otimes \sO_{C_{n-1}} \arrow[r,equal ] \arrow[d] &  S^{n-1}H^0(L(-P'))\otimes \sO_{C_{n-1}} \ar[d]  &    &   \\
			0 \arrow[r] & \Gamma_{L,P}^{(n)}|_{X_{P'}} \arrow[r] \arrow[d] & S^{n-1}H^0(L)\otimes \sO_{C_{n-1}}  \arrow[r] \arrow[d] & T_{n-2,L}  \arrow[r] \arrow[d,equal] & 0 \\
			0 \arrow[r] & \Gamma_{L,P}^{(n-1)}\arrow[r] \arrow[d] & S^{n-2}H^0(L)\otimes \sO_{C_{n-1}}  \arrow[r] \arrow[d] & T_{n-2,L}\arrow[r]  & 0 \\
			& 0                     & 0                     &                     &
		\end{tikzcd}
\]
The middle column is \eqref{eqn_symmetric_restriction} with $n$ replaced by
$n-1$. The left column gives \eqref{eqn-gamma-rest-middle}.
\end{proof}

Applying Lemma~\ref{lem-wedge-coker} to \eqref{eqn-gamma-defn} gives the
exterior-power sequence used throughout the proof:
\begin{equation}\label{eqn-wedge-gamma-middle}
0\longrightarrow
\wedge^aS^{n-1}H^0(L)\otimes T_{n,\sO_C(-P)}
\longrightarrow
\wedge^a\Gamma^{(n)}_{L,P}
\longrightarrow
\wedge^aM_{T_{n-1,L}}
\longrightarrow0,
\end{equation}
for $1\leq a\leq h^0(C_{n-1},T_{n-1,L})$, where the last term is viewed
as a sheaf on $X_P$.

\noindent\emph{Set-up.} Assume now that $\deg L\geq2g+1$ and write $r:=h^0(L)-1$. Thus
$h^1(C,L)=0$, $r=\deg L-g$, and $r-1\geq g$. We may choose points
$P_0,\ldots,P_{r-2}$ sufficiently general, so that, after setting
\[
L_0:=L,
\quad
L_{\ell}:=L(-P_0-\cdots-P_{\ell-1}),
\quad(1\leq\ell\leq r-1),\quad A:=L_{r-1}=L(-P_0-\cdots-P_{r-2})
\]
every $L_\ell$ is nonspecial and globally generated and $A$
is a basepoint-free pencil of degree $g+1$. For $g=0$, one has $A\cong\sO_{\PP^1}(1)$ directly.
Accordingly, all the kernel
bundles below are defined. We later refer to $\{P_{\ell}\}_{\ell=0}^{r-2}$ as \textit{flag points}. By Lemma~\ref{lm:02}, we obtain vector bundles
$$\Gamma^{(n)}_{L_{\ell}}:=\Gamma^{(n)}_{L_{\ell},P_{\ell}}\cong M_{T_{n,L_{\ell}}}/M_{T_{n,L_{\ell+1}}} \text{ for }0\leq \ell\leq r-2,$$
and by (\ref{eqn-gamma-defn}), each $\Gamma^{(n)}_{L_\ell}$ sits in the following exact sequence:
$$0 \longrightarrow \Gamma^{(n)}_{L_{\ell}} \longrightarrow S^{n-1}H^0(L_{\ell})\otimes \sO_{C_n} \longrightarrow T_{n-1,L_{\ell}}\longrightarrow 0.$$
Thus we obtain a filtration of $M_{T_{n,L}}$,
\begin{equation}\label{filtration}
	M_{T_{n,L}}=F^0\supseteq F^1 \supseteq \cdots \supseteq F^{r-1}\supseteq F^{r}=0
\end{equation}
in which $F^{\ell}=M_{T_{n,L_{\ell}}}$ for $0\leq \ell\leq r-1$,  $F^{\ell}/F^{\ell+1}\cong \Gamma^{(n)}_{L_{\ell}}$ for $ 0\leq \ell\leq r-2$,  and $F^{r-1}/F^{r}\cong M_{T_{n,A}}$.
\subsection{Proof of Theorem~\ref{thm_weight_one}}
Throughout the remainder of this section, we fix $C,L,B,g,n,p$,
and assume the following two conditions:
\begin{enumerate}[\normalfont(1)]
\setlength{\itemsep}{0.4\baselineskip}
    \renewcommand{\theenumi}{H\arabic{enumi}}
    \item \label{hyp:weight-one-B}$\deg(B)\geq(n+1)g+p$;
    \item \label{hyp:weight-one-L} $\displaystyle
\deg(L)\ge
\begin{cases}
2g+1, & n\ge2 \text{ or } g=0,\\
2g+p+1, & n=1 \text{ and } g\ge1.
\end{cases}$
\end{enumerate}

Condition~(\ref{hyp:weight-one-L}) makes
$L$ globally generated, while condition~(\ref{hyp:weight-one-B}) combined with Lemma ~\ref{lem-vanishing-T} gives $H^i(C_n,T_{n,B}+mT_{n,L})=0$ for all $i>0,m\geq 0$. Thus by Lemma~\ref{lemma-kernel-bdl}, the assertion reduces to
\begin{equation}\label{eq:weight-one-target}
H^1\left(
C_n,\bigwedge^{p+1}M_{T_{n,L}}\otimes T_{n,B}
\right)=0.
\end{equation}

Choose the filtration in \eqref{filtration} from general points
$\{P_{\ell}\}_{\ell=0}^{r-2}$. Remark~\ref{rmk:01}
reduces \eqref{eq:weight-one-target} to the following vanishing (\ref{eq:01}) for graded terms indexed by $a_0,\ldots,a_{r-2}, a\geq0$,
\begin{equation}\label{eq:01}
	H^1\left(
	C_n,
	\bigotimes_{\ell=0}^{r-2}
	\bigwedge^{a_\ell}\Gamma^{(n)}_{L_\ell}
	\otimes\bigwedge^aM_{T_{n,A}}
	\otimes T_{n,B}
	\right)=0,\qquad
	a+\sum_{\ell=0}^{r-2}a_\ell=p+1.
\end{equation}
If $a=0$, the above
vanishing is
\begin{equation*}
H^1\left(
C_n,
\bigotimes_{\ell=0}^{r-2}
\bigwedge^{a_\ell}\Gamma^{(n)}_{L_\ell}
\otimes T_{n,B}
\right)=0,
\quad\textrm{with }
\sum_{\ell=0}^{r-2}a_\ell=p+1.
\tag{$\mathrm A$}
\end{equation*}
If $a\geq n+1$, as $\operatorname{rk}M_{T_{n,A}}=n$ the vanishing (\ref{eq:01}) holds automatically. For $1\leq a\leq n$, consider the Koszul resolution of
$\bigwedge^aM_{T_{n,A}}$:
\begin{equation}\label{eq:weight-one-koszul-resolution}
\begin{split}
0\longrightarrow
&\bigwedge^{n+1}S^nH^0(A)
   \otimes T_{n,-(n+1-a)A}
\longrightarrow\cdots
\longrightarrow
\bigwedge^{a+j}S^nH^0(A)\otimes T_{n,-jA}
\longrightarrow\cdots \\
& \longrightarrow
\bigwedge^{a+1}S^nH^0(A)\otimes T_{n,-A}
\longrightarrow
\bigwedge^aM_{T_{n,A}}
\longrightarrow0.
\end{split}
\end{equation}
After tensoring this resolution by the remaining factors, the associated
long exact cohomology sequences show that the required vanishing (\ref{eq:01}) follows from
\begin{equation*}
H^i\left(
C_n,
\bigotimes_{\ell=0}^{r-2}
\bigwedge^{a_\ell}\Gamma^{(n)}_{L_\ell}
\otimes T_{n,B-iA}
\right)=0,
\quad
1\leq i\leq n+1-a.
\tag{$\mathrm B$}
\end{equation*}

Thus the vanishings $(\mathrm A)$ and $(\mathrm B)$ suffice for
\eqref{eq:weight-one-target}. We successively apply
\eqref{eqn-wedge-gamma-middle} to the nonzero $\wedge^{a_{\ell}}\Gamma_{L_{\ell}}$-factors to reduce $(\mathrm A)$ and $(\mathrm B)$  to the vanishing involving fewer $\Gamma$-factors and on a symmetric product of smaller dimension. Here we illustrate the essential idea by removing one such factor.  Assume that $\bar{\ell}$ is the maximal index such that $a_{\bar{\ell}}$ is  nonzero. We will remove $\Gamma^{(n)}_{L_{\bar{\ell}}}$. Identifying $C_{n-1}\cong X_{P_{\bar{\ell}}}$ and using \eqref{eqn-wedge-gamma-middle}, the vanishing in $(\mathrm A)$ and $(\mathrm B)$  will be obtained by the vanishing of the following groups
\begin{equation*}
	H^1\left(
	C_n,
	\bigotimes_{{\ell<{\bar{\ell}}}}
	\bigwedge^{a_\ell}\Gamma^{(n)}_{L_\ell}
	\otimes T_{n,B-P_{\bar{\ell}}}
	\right), 	\qquad 
	H^1\left(
	C_{n-1},
	\bigotimes_{{\ell<{\bar{\ell}}}}
	\bigwedge^{a_\ell}\Gamma^{(n)}_{L_\ell}|_{C_{n-1}}
	\otimes T_{n-1,B}\otimes \wedge^{a_{\bar{\ell}}}M_{T_{n-1,L_{\bar{\ell}}}}
	\right), 
\end{equation*}
$$
H^i\left(
C_n,
\bigotimes_{\ell<\bar{\ell}}
\bigwedge^{a_\ell}\Gamma^{(n)}_{L_\ell}
\otimes T_{n,B-iA-P_{\bar{\ell}}}
\right), \qquad
H^i\left(
C_{n-1},
\bigotimes_{\ell<\bar{\ell}}
\bigwedge^{a_\ell}\Gamma^{(n)}_{L_\ell}|_{C_{n-1}}
\otimes T_{n-1,B-iA}\otimes \wedge^{a_{\bar{\ell}}}M_{T_{n-1,L_{\bar{\ell}}}}
\right).
$$
Next we need to deal with the bundles $\bigwedge^{a_\ell}\Gamma^{(n)}_{L_\ell}|_{C_{n-1}}$ with $\ell<\bar\ell$ and the newly introduced bundle $\wedge^{a_{\bar{\ell}}}M_{T_{n-1,L_{\bar{\ell}}}}$. For one such $\Gamma_{L_\ell}$-factor, as $P_{\ell}\neq P_{\bar\ell}$, equation~\eqref{eqn-gamma-rest-middle} gives
\[
0\longrightarrow
\textrm{trivial bundle}
\longrightarrow
\Gamma^{(n)}_{L_\ell}\big|_{C_{n-1}}
\longrightarrow
\Gamma^{(n-1)}_{L_\ell}
\longrightarrow0.
\]
Thus all the above vanishings can be reduced to vanishings involving $\wedge^{a'_\ell}\Gamma^{(n-1)}_{L_\ell}$ with $a'_{\ell}\leq a_{\ell}$. To deal with $\wedge^{a_{\bar{\ell}}}M_{n-1,L_{\bar{\ell}}}$, we use the same set of general points $P_{\bar{\ell}},\cdots P_{r-2}$ to get a filtration for $M_{T_{n-1,L_{\bar{\ell}}}}$, and then repeat the filtration construction on $C_{n-1}$ using $\Gamma^{(n-1)}$-bundles.

In summary, at each
step, the kernel term remains on the same symmetric product and subtracts
one point $P$ from the coefficient of the divisors defined in the $T$-line bundle, whereas the quotient term is supported
on a divisor $X_P$, a symmetric product of dimension one less. We therefore argue by induction on
$m=\dim C_m$; only the quotient terms lower the dimension.

The two types of terms require different numerical data. Kernel terms
accumulate point twists at the same level $m$. Quotient terms pass from
$C_m$ to $C_{m-1}$, and the resulting decrease in exterior degree offsets
the accumulated point twists. In addition, resolving an exterior power of
$M_{T_{m-1,A}}$ may introduce one further twist by $A$. To record all the possible
coefficient changes that persist under dimension reduction, set
\[
B_{\boldsymbol c,k}:=B-D_{\boldsymbol c}-kA,
\qquad
D_{\boldsymbol c}:=\sum_{\ell=0}^{r-2}c_\ell P_\ell,
\qquad
|\boldsymbol c|:=\sum_{\ell=0}^{r-2}c_\ell.
\]

For integers $m,p_0,k$ and
$\boldsymbol c=(c_0,\ldots,c_{r-2})\in\mathbb Z^{r-1}$, we call the tuple
$(m,p_0,\boldsymbol c,k)$ \textit{admissible} if
\begin{equation*}
1\leq m\leq n,\qquad p_0\geq0,\qquad
0\leq k,c_\ell\leq n-m,\qquad
|\boldsymbol c|+k+p_0\leq p.
\end{equation*}
Here $m$ is the dimension of the current symmetric product, $p_0+1$ is the
total exterior degree, and $k$ records the number of persistent twists by
$A$. After $n-m$ dimension reductions, each $c_\ell$ and $k$ can increase by
at most one at each step. The final inequality records the corresponding
decrease in exterior degree. Put $e:=\deg(B)-(n+1)g-p\geq0$.
Condition~(\ref{hyp:weight-one-B}) controls all these losses, since
\begin{align}
\deg(B_{\boldsymbol c,k})
&=\deg(B)-|\boldsymbol c|-k(g+1)\notag\\
&\geq(n+1-k)g+p_0+e \geq(m+1)g+p_0.
\label{eq:weight-one-B-degree}
\end{align}

For admissible data, let $(\mathbf I_m)$ denote the assertion that
\begin{equation*}
H^1\left(
C_m,
\bigotimes_{\ell=0}^{r-2}
\bigwedge^{a_\ell}\Gamma^{(m)}_{L_\ell}
\otimes T_{m,B_{\boldsymbol c,k}}
\right)=0
\quad\text{whenever}\quad
\sum_{\ell=0}^{r-2}a_\ell=p_0+1.
\tag{$\mathbf I_m$}
\end{equation*}
For $1\leq a\leq m$, let $(\mathbf {II}_m)$ denote the assertion that
\begin{equation*}
H^1\left(
C_m,
\bigotimes_{\ell=0}^{r-2}
\bigwedge^{a_\ell}\Gamma^{(m)}_{L_\ell}
\otimes\bigwedge^aM_{T_{m,A}}
\otimes T_{m,B_{\boldsymbol c,k}}
\right)=0
\quad\text{whenever}\quad
a+\sum_{\ell=0}^{r-2}a_\ell=p_0+1.
\tag{$\mathbf {II}_m$}
\end{equation*}
For $1\leq a\leq m$, let $(\mathbf {III}_m)$ denote
the family of vanishings for $ 1\leq i\leq m+1-a$:
\begin{equation*}
H^i\left(
C_m,
\bigotimes_{\ell=0}^{r-2}
\bigwedge^{a_\ell}\Gamma^{(m)}_{L_\ell}
\otimes T_{m,B_{\boldsymbol c,k}-iA}
\right)=0
\quad\textrm{whenever}\quad
a+\sum_{\ell=0}^{r-2}a_\ell=p_0+1.
\tag{$\mathbf {III}_m$}
\end{equation*}
Here $a$ is an \textit{auxiliary index}: it does not occur as an exterior
exponent in the displayed bundle, but records the cohomological range. The
analogue of \eqref{eq:weight-one-koszul-resolution} on $C_m$ shows that
$(\mathbf {III}_m)$ implies $(\mathbf {II}_m)$. 

The initial tuple $(n,p,\boldsymbol0,0)$ is admissible. For this tuple,
$(\mathbf I_n)$ and $(\mathbf {III}_n)$ are precisely $(\mathrm A)$ and
$(\mathrm B)$, respectively.

Before formulating the induction, we examine the terminal coefficients left by
taking the kernel term at every remaining $\Gamma$-factor. Let the
point twists be indexed by
$J:=\{\ell:a_\ell\neq0\}$. 

For every admissible tuple $(m,p_0,\boldsymbol c,k)$, In $(\mathbf I_m)$ one has
$|J|\leq p_0+1$, and \eqref{eq:weight-one-B-degree} makes the terminal
coefficient nonspecial, since
\[
\deg\left(
B_{\boldsymbol c,k}-\sum_{\ell\in J}P_\ell
\right)\geq(m+1)g-1\geq 2g-1.
\]
In $(\mathbf {III}_m)$ one has $|J|\leq p_0$ and
$1\leq i\leq m$, and the coefficient is
\[
M_i:=B_{\boldsymbol c,k}-\sum_{\ell\in J}P_\ell-iA.
\]
By \eqref{eq:weight-one-B-degree}, we have
\begin{equation}\label{eq:weight-one-M-degree}
\deg(M_i)\geq(n+1-k-i)g+e-i.
\end{equation}
Since $k\leq n-m$ and $i\leq m$, we have $k+i\leq n$. If $k+i<n$, then
\[
\deg(K_C-M_i)\leq i-e-2\leq i-2,
\]
so the bound $h^0(N)\leq\max\{0,\deg N+1\}$ and Serre duality give
$h^1(C,M_i)\leq i-1$. The tautological cohomology
formula then gives $H^i(C_m,T_{m,M_i})=0$.

When $k+i=n$, necessarily $k=n-m$ and $i=m$. Since $A=L-\sum P_\ell$, the coefficient becomes
\begin{equation}\label{eq:weight-one-weighted-M}
M_m=
B-nL+
\sum_{\ell=0}^{r-2}
\left(n-c_\ell-\mathbf1_{\{\ell\in J\}}\right)P_\ell.
\end{equation}
The multiplicities are at least $m-1\geq0$, while
$\deg M_m\geq g+e-m$. The required vanishing is
\[
H^m(C_m,T_{m,M_m})=\bigwedge^mH^1(C,M_m)=0.
\]
For $g=0$, the degree estimate gives $h^1(C,M_m)\leq m-1$ directly.
For $g\geq1$, by Serre duality,
$$H^1(C,M_m)^{\vee}\cong H^0\left(K_C-(B-nL)-\sum_{\ell=0}^{r-2}
\left(n-c_\ell-\mathbf1_{\{\ell\in J\}}\right)P_\ell\right)$$
If the vanishing orders at the flag points impose independent
conditions, Riemann--Roch gives
\begin{align*}
h^1(C,M_m) &= h^1(C,B-nL)-\sum_{\ell=0}^{r-2}
\left(n-c_\ell-\mathbf1_{\{\ell\in J\}}\right)\\ 
&= h^0(C,B-nL)-\deg M_m+g-1.
\end{align*}
Thus, for sufficiently general flag points $\{P_{\ell}\}_{\ell=0}^{r-2}$, as $\deg M_m\geq g+e-m$,
\[
h^1(C,M_m)=\max\{0,h^0(C,B-nL)-\deg M_m+g-1\}
\leq\max\{0,h^0(C,B-nL)-e+m-1\}.
\]

If $h^0(C,B-nL)\leq e$, then $h^1(C,M_m)\leq m-1$, hence $H^m(C_m,T_{m,M_m})=0$.
 
If
$H^1(C,B-nL)=0$, the effectiveness of $M_m-(B-nL)$ implies that
$h^1(C,M_m)=0$. Hence $H^m(C_m,T_{m,M_m})=0$.
It leads to the auxiliary condition
\begin{equation*}\tag{H3}\label{hyp:weight-one-residual}
g=0,\quad\text{or}\quad h^0(C,B-nL)\leq e,\quad\text{or}\quad H^1(C,B-nL)=0.
\end{equation*}
There are only finitely many admissible data and multiplicity patterns. Hence the above argument proves the following
Lemma~\ref{lem:weight-one-coefficients} under
this additional assumption. 

\begin{lemma}\label{lem:weight-one-coefficients}Assume conditions~(\ref{hyp:weight-one-B}),
(\ref{hyp:weight-one-L}), and~(\ref{hyp:weight-one-residual}). The flag points $\{P_{\ell}\}_{\ell=0}^{r-2}$ may be chosen generically so that, for every admissible
$(m,p_0,\boldsymbol c,k)$, the following hold.
\begin{enumerate}[\normalfont(i)]
    \item If $J\subseteq\{0,\ldots,r-2\}$ and $|J|\leq p_0+1$, then
    \[
    H^1\left(
    C_m,
    T_{m,B_{\boldsymbol c,k}-\sum_{\ell\in J}P_\ell}
    \right)=0.
    \]
    \item If $J\subseteq\{0,\ldots,r-2\}$, $|J|\leq p_0$, and
    $1\leq i\leq m$, then
    \[
    H^i\left(
    C_m,
    T_{m,B_{\boldsymbol c,k}
    -\sum_{\ell\in J}P_\ell-iA}
    \right)=0.
    \]
\end{enumerate}
\end{lemma}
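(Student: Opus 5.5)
The plan is to reduce both assertions to cohomology on $C$ through the tautological formula $H^i(C_m,T_{m,N})=S^{m-i}H^0(C,N)\otimes\wedge^iH^1(C,N)$. This gives $H^i(C_m,T_{m,N})=0$ as soon as $h^1(C,N)\leq i-1$. Only finitely many data $(m,p_0,\boldsymbol c,k,J,i)$ occur, so it suffices to show that each single vanishing holds for flag points in a dense open subset of $C^{r-1}$, and then intersect these open sets. Note that in case (i) we only need $h^1(C,N)=0$.

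For (i), we apply \eqref{eq:weight-one-B-degree} and use $|J|\leq p_0+1$. This gives $\deg\big(B_{\boldsymbol c,k}-\sum_{J}P_\ell\big)\geq (m+1)g-1\geq 2g-1$. Hence the line bundle is nonspecial for every choice of points, and $H^1(C_m,\cdot)=0$ follows.

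For (ii), we set $M_i$ as in the text and use \eqref{eq:weight-one-M-degree}. Since $k\leq n-m$ and $i\leq m$, we have $k+i\leq n$.
\begin{itemize}
\item[] \emph{Case $k+i<n$.} Then $\deg M_i\geq 2g-1-e+\ldots$; more precisely, $\deg(K_C-M_i)\leq i-2-e$. The bound $h^0(N)\leq\max\{0,\deg N+1\}$ then gives $h^1(M_i)\leq i-1$, again for all choices of points.
\item[] \emph{Case $k+i=n$.} This forces $k=n-m$ and $i=m$. Substituting $A=L-\sum_{\ell}P_\ell$ gives the weighted expression \eqref{eq:weight-one-weighted-M}, with multiplicities $\geq m-1\geq 0$.
\end{itemize}

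For $g=0$, every line bundle of degree $\geq -1$ has $h^1=0$. More generally, $h^1(\sO(d))=\max(0,-d-1)$, and $\deg M_m\geq e-m$ gives $h^1\leq m-1$. This settles the genus-zero case, and the same estimate is what yields the polarization bound $\deg L\geq 1$ mentioned in Remark~\ref{rmk_intro}.

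For $g\geq1$ we use Serre duality: $h^1(M_m)=h^0\big(K_C-(B-nL)-\sum_\ell\mu_\ell P_\ell\big)$, where the $\mu_\ell\geq0$ are the multiplicities. The key step is the general-position claim. For general points $P_0,\dots,P_{r-2}$ (with the fixed multiplicity pattern), imposing vanishing of order $\mu_\ell$ at $P_\ell$ on the fixed space $H^0(K_C-(B-nL))$ cuts the dimension by $\min\{h^0,\sum\mu_\ell\}$. I would prove this by adding the points one at a time and using the following fact. For a fixed nonzero linear system $V$ on $C$ and a general point $P$, vanishing to order $\mu$ at $P$ drops the dimension by exactly $\min(\dim V,\mu)$, because the vanishing sequence at a general point is $0,1,\dots,\dim V-1$ in characteristic zero. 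Openness in the points then makes this a dense open condition. The condition must also be compatible with the requirement from the set-up that the $L_\ell$ be nonspecial and globally generated and that $A$ be a basepoint-free pencil, which is itself open and dense.

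With independence in hand, Riemann--Roch gives
$$h^1(M_m)=\max\{0,\,h^0(B-nL)-\deg M_m+g-1\}\leq\max\{0,\,h^0(B-nL)-e+m-1\}.$$
We then split according to (H3):
\begin{itemize}
\item[] If $h^0(B-nL)\leq e$, we get $h^1(M_m)\leq m-1$.
\item[] If $H^1(C,B-nL)=0$, then $M_m-(B-nL)$ is effective, so $h^1(M_m)=0$ with no genericity needed.
\end{itemize}
In either case $\wedge^mH^1(C,M_m)=0$.

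The main obstacle is the general-position step for weighted vanishing conditions with multiplicities up to $n$. The points must be chosen before the data are known, and the same points must work simultaneously for all finitely many multiplicity patterns. Both requirements are handled by intersecting the finitely many dense open sets.
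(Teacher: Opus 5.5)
Your proposal is correct and follows the paper's argument step for step: nonspeciality from \eqref{eq:weight-one-B-degree} in (i); the split into $k+i<n$ versus $k+i=n$ in (ii); the weighted coefficient \eqref{eq:weight-one-weighted-M}; and the three cases of (H3). The one addition is that you justify the independence of the weighted vanishing conditions (general points are not ramification points of the successive linear series, plus semicontinuity and finitely many patterns), which the paper only asserts. The only blemish is the garbled bound ``$\deg M_i\geq 2g-1-e+\ldots$''; in the case $k+i<n$ the correct estimate is $\deg M_i\geq 2g+e-i$, and that is exactly what your ``more precisely'' clause uses.
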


We first
prove our main theorem under condition~(\ref{hyp:weight-one-residual}); we treat
the remaining cases at the end of the proof. The preceding reduction
shows that Theorem~\ref{thm_weight_one} follows from the following proposition.
\begin{proposition}\label{prop:weight-one-induction}
Assume condition~(\ref{hyp:weight-one-residual}). For every admissible
$(m,p_0,\boldsymbol c,k)$, the assertions
$(\mathbf I_m)$, $(\mathbf {II}_m)$, and $(\mathbf {III}_m)$ hold.
\end{proposition}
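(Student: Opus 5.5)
We argue by induction on $m$, and for fixed $m$ by a secondary induction on the number of nonzero exponents $a_\ell$. Throughout, the flag points are chosen generically as in Lemma~\ref{lem:weight-one-coefficients}. To make the secondary induction close, we first strengthen $(\mathbf I_m)$ and $(\mathbf {III}_m)$. We allow an extra twist $-\sum_{\ell\in J'}P_\ell$ of the coefficient, where $J'$ is a set of indices strictly larger than every $\ell$ with $a_\ell\neq0$. We also require $|J'|$ plus the total remaining exterior degree to be at most $p_0+1$ (respectively $p_0$ in the $(\mathbf{III}_m)$ case). The terminal case, when no $\Gamma$-factor remains, is then exactly Lemma~\ref{lem:weight-one-coefficients}(i) and (ii). The implication $(\mathbf{III}_m)\Rightarrow(\mathbf{II}_m)$ is already recorded, via the Koszul resolution \eqref{eq:weight-one-koszul-resolution} on $C_m$. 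So only $(\mathbf I_m)$ and $(\mathbf{III}_m)$ need proof.

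\emph{Base case $m=1$.} On $C_1=C$ the sequence \eqref{eqn-gamma-defn} reads $0\to\Gamma^{(1)}_{L_\ell}\to\sO_C\to\sO_{P_\ell}\to0$, so $\Gamma^{(1)}_{L_\ell}\cong\sO_C(-P_\ell)$. Hence $\wedge^{a_\ell}\Gamma^{(1)}_{L_\ell}=0$ for $a_\ell\geq2$, and otherwise every bundle in sight is a line bundle $T_{1,B_{\boldsymbol c,k}-\sum_{J}P_\ell-iA}$. The vanishings are then literally Lemma~\ref{lem:weight-one-coefficients}.

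\emph{Inductive step for $(\mathbf I_m)$.} Let $\bar\ell$ be the largest index with $a_{\bar\ell}\neq0$. Tensor \eqref{eqn-wedge-gamma-middle}, taken for $L_{\bar\ell}$ and $P_{\bar\ell}$, with the remaining factors; this gives two terms.
\begin{itemize}
\item The kernel term has one fewer $\Gamma$-factor and the extra twist $-P_{\bar\ell}$, so $\bar\ell$ joins $J'$. It is handled by the secondary induction, since $|J'|$ grows by one while the exterior degree drops by $a_{\bar\ell}\geq1$.
\item The quotient term lives on $X_{P_{\bar\ell}}\cong C_{m-1}$. For $\ell<\bar\ell$, the restricted factors $\Gamma^{(m)}_{L_\ell}|_{C_{m-1}}$ are extensions of $\Gamma^{(m-1)}_{L_\ell}$ by trivial bundles, by \eqref{eqn-gamma-rest-middle}. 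The new factor $\wedge^{a_{\bar\ell}}M_{T_{m-1,L_{\bar\ell}}}$ is filtered using the same flag points $P_{\bar\ell},\dots,P_{r-2}$, with graded pieces $\Gamma^{(m-1)}_{L_\ell}$ for $\ell\geq\bar\ell$ and $M_{T_{m-1,A}}$. Remark~\ref{rmk:01} and Lemma~\ref{lem-vanishing-wedge} then reduce the quotient term to $(\mathbf I_{m-1})$ or $(\mathbf{II}_{m-1})$. We use exponents $a'_\ell\leq a_\ell$ for $\ell<\bar\ell$, and new exponents summing to $a_{\bar\ell}$ from the filtration. The twists $P_\ell$ with $\ell\in J'$ are absorbed by setting $\boldsymbol c'=\boldsymbol c+\mathbf 1_{J'}$, and we set $p_0'=p_0-\sum_{\ell\in J'}a_\ell-\sum_{\ell<\bar\ell}(a_\ell-a'_\ell)$.
\end{itemize}
Each $\ell\in J'$ carried $a_\ell\geq1$, so $|\boldsymbol c'|+k+p_0'\leq|\boldsymbol c|+k+p_0\leq p$. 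Moreover $c'_\ell\leq n-m+1=n-(m-1)$. Hence $(m-1,p_0',\boldsymbol c',k)$ is admissible, and the induction hypothesis applies.

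\emph{Inductive step for $(\mathbf{III}_m)$.} The same dévissage is applied to $H^i$ with coefficient $B_{\boldsymbol c,k}-iA$, and the kernel terms are treated identically. On the quotient side, a factor $M_{T_{m-1,A}}$ produced by the filtration must be resolved by the Koszul resolution on $C_{m-1}$. That resolution introduces further twists by $-A$ and raises the cohomological degree. We record one such twist persistently by passing to $k'=k+1\leq n-(m-1)$, leaving the data $(m-1,p_0',\boldsymbol c',k+1)$ with $H^{i'}$ and twist $i'A$. Admissibility again follows because the lost exterior degree pays for $|\boldsymbol c'|+k'$.

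The main obstacle is exactly this bookkeeping in $(\mathbf{III}_m)$. One must match the cohomological degree $i$, the $A$-twist, and the auxiliary range $1\leq i\leq m+1-a$ after dropping to $C_{m-1}$, so that every term lands inside $(\mathbf{III}_{m-1})$ or $(\mathbf{I}_{m-1})$ with an admissible tuple. The extreme case $k+i=n$ must land precisely in the situation covered by Lemma~\ref{lem:weight-one-coefficients}(ii), which is where hypothesis (H3) enters. To make this work, I would first check that for each $H^i$ on $C_{m-1}$ we can trade one unit of $i$ for one unit of $k$. This needs $i+k$ to be preserved along every induction path, which is the invariant encoded by $k\leq n-m$, $i\leq m$.
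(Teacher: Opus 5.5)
Your architecture is the paper's. You induct on $m$, peel off the nonzero $\Gamma$-factors in decreasing order via \eqref{eqn-wedge-gamma-middle}, let kernel terms accumulate point twists until Lemma~\ref{lem:weight-one-coefficients} applies, and send quotient terms to $X_{P_{\bar\ell}}\cong C_{m-1}$. Your secondary induction with the extra twist $-\sum_{\ell\in J'}P_\ell$ is a repackaging of the paper's chain $\mathcal E_0\supseteq\cdots\supseteq\mathcal E_s$. Your base case and your treatment of the $(\mathbf I_m)$ quotient terms are correct; add only that pieces with $a'\geq m$ vanish for rank reasons, since $(\mathbf{II}_{m-1})$ requires $1\leq a'\leq m-1$.

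The gap is the quotient step for $(\mathbf{III}_m)$. You call it the main obstacle but do not carry it out. You never specify the new $p_0'$ or the new auxiliary index, and you omit the subcase $a'=0$. The invariant you propose to check is also false. If step $j$ of the Koszul resolution of $\wedge^{a'}M_{T_{m-1,A}}$ contributes, the group becomes $H^{i+j-1}$ with twist $-(i+j)A$. With $h=i+j-1$ and $k'=k+1$ one gets $h+k'=i+k+j$, so $i+k$ is not preserved. Keeping the old auxiliary index $a$ on $C_{m-1}$ fails as well, because its range $i\leq m-a$ misses $i=m+1-a\leq m-1$.

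The paper closes this step by always resetting the auxiliary index to $\alpha=1$ on $C_{m-1}$ and taking $p_0'=\beta:=\sum_\ell a'_\ell$, with the $t:=|J'|$ point twists absorbed into $\boldsymbol c'$. Three separate unit losses pay for this:
\begin{itemize}
\item each of the $t$ peeled factors has $a_\ell\geq1$;
\item the original auxiliary index satisfies $a\geq1$, so $\sum_\ell a_\ell\leq p_0$;
\item when present, $a'\geq1$.
\end{itemize}
Hence $\beta\leq p_0-t-a'$.

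If $a'=0$, the data $(m-1,\beta,\boldsymbol c',k)$ are admissible. The case $i\leq m-1$ is then $(\mathbf{III}_{m-1})$ with $\alpha=1$, and $i=m$ vanishes because $\dim C_{m-1}=m-1$.

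If $1\leq a'\leq m-1$, your idea of absorbing one copy of $A$ into $k$ is right. Write $B_{\boldsymbol c',k}-(i+j)A=B_{\boldsymbol c',k+1}-hA$. The data $(m-1,\beta,\boldsymbol c',k+1)$ are admissible, because $k+1\leq n-(m-1)$ and
\[
|\boldsymbol c'|+(k+1)+\beta\leq|\boldsymbol c|+t+k+1+p_0-t-1\leq p.
\]
Then $h\leq m-1$ is $(\mathbf{III}_{m-1})$ with $\alpha=1$, and $h\geq m$ vanishes for dimension reasons.

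Nothing like conservation of $i+k$ is needed. What matters is $k\leq n-m$ at the current level, plus dimension vanishing on $C_{m-1}$. Condition~(\ref{hyp:weight-one-residual}) enters only through Lemma~\ref{lem:weight-one-coefficients}(ii) at the terminal kernels. That lemma already covers every admissible tuple, so there is no separate extreme case $k+i=n$ to route.
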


We prove the proposition after one technical lemma. It describes how
quotient terms produce admissible data on lower dimension.

\begin{lemma}\label{lem:weight-one-restriction}
Let $m\geq2$, and fix admissible data
$(m,p_0,\boldsymbol c,k)$.
\begin{enumerate}[\normalfont(i)]
    \item Every non-zero graded term in the natural filtration of the
    quotient term arising from $(\mathbf I_m)$ is an instance of
    $(\mathbf I_{m-1})$ or $(\mathbf {II}_{m-1})$ for admissible data on
    dimension $m-1$.
    \item Every such non-zero graded term arising from $(\mathbf {III}_m)$ is, after
    resolving the remaining exterior power of $M_{T_{m-1,A}}$, an instance
    of $(\mathbf {III}_{m-1})$ for admissible data on dimension $m-1$.
\end{enumerate}
\end{lemma}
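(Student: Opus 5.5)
Fix the maximal index $\bar\ell$ with $a_{\bar\ell}\neq0$ and identify $X_{P_{\bar\ell}}\cong C_{m-1}$. The plan is to compute directly the quotient term produced by \eqref{eqn-wedge-gamma-middle} (with $n$ replaced by $m$), filter it, and then check the admissibility inequalities for each graded piece. The quotient term arising from $(\mathbf I_m)$ is
\[
H^1\Bigl(C_{m-1},\bigotimes_{\ell<\bar\ell}\bigwedge^{a_\ell}\Gamma^{(m)}_{L_\ell}\big|_{C_{m-1}}\otimes\bigwedge^{a_{\bar\ell}}M_{T_{m-1,L_{\bar\ell}}}\otimes T_{m-1,B_{\boldsymbol c,k}}\Bigr),
\]
using $T_{m,N}|_{X_P}=T_{m-1,N}$. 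For $(\mathbf{III}_m)$ the quotient term has the same shape, with coefficient $B_{\boldsymbol c,k}-iA$ and with $H^i$ in place of $H^1$.

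Next I filter the bundle. For $\ell<\bar\ell$ we have $P_\ell\neq P_{\bar\ell}$, so \eqref{eqn-gamma-rest-middle} gives an extension of $\Gamma^{(m-1)}_{L_\ell}$ by the trivial bundle $S^{m-1}H^0(L_\ell(-P_{\bar\ell}))\otimes\sO$. Lemma~\ref{lem-vanishing-wedge} then reduces $\bigwedge^{a_\ell}$ of this extension to pieces $\bigwedge^{a'_\ell}\Gamma^{(m-1)}_{L_\ell}$ tensored with trivial bundles, where $a'_\ell\leq a_\ell$. For $M_{T_{m-1,L_{\bar\ell}}}$ I use the filtration \eqref{filtration} on $C_{m-1}$ built from the flag points $P_{\bar\ell},\dots,P_{r-2}$. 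Its graded pieces are $\Gamma^{(m-1)}_{L_\ell}$ for $\ell\geq\bar\ell$ and $M_{T_{m-1,A}}$. By Remark~\ref{rmk:01}, a typical graded term is
\[
\bigotimes_\ell\bigwedge^{a'_\ell}\Gamma^{(m-1)}_{L_\ell}\otimes\bigwedge^{a'}M_{T_{m-1,A}}\otimes T_{m-1,B_{\boldsymbol c,k}},
\]
and its total exterior degree $a'+\sum a'_\ell$ is at most $\sum_\ell a_\ell$. If $a'\geq m$ the term vanishes by rank. If $a'=0$ it is an instance of $(\mathbf I_{m-1})$. If $1\leq a'\leq m-1$ it is an instance of $(\mathbf{II}_{m-1})$.

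To get admissible data I have to account for the trivial factors that were dropped. In the earlier kernel steps each removed $\Gamma_{L_\ell}$ contributed a twist $-P_\ell$ at level $m$. Those twists are what I record in the new $\boldsymbol c'$: for every $\ell$ whose kernel term was taken, set $c'_\ell=c_\ell+1$, and otherwise $c'_\ell=c_\ell$. Then $c'_\ell\leq n-(m-1)$. Set $k'=k$ for (i). For (ii), resolving $\bigwedge^{a'}M_{T_{m-1,A}}$ by the Koszul complex \eqref{eq:weight-one-koszul-resolution} on $C_{m-1}$ produces coefficients $B_{\boldsymbol c,k}-iA-jA$. I would rewrite these as $B_{\boldsymbol c',k+1}-i'A$ with $1\leq i'\leq m-a'$, so that $k'=k+1\leq n-m+1$. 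New total degree $p_0'+1$ is the degree of the graded piece. Two points make the admissibility inequality $|\boldsymbol c'|+k'+p_0'\leq p$ hold. First, each point twist $P_\ell$ that was added consumed a wedge factor, which lowers $p_0$ by at least one. Second, the extra $A$-twist is paid for by the drop from $H^i$ to an exponent range with auxiliary index shifted.

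The main obstacle is exactly this bookkeeping in (ii). I must show that the index shift from resolving $M_{T_{m-1,A}}$ can be absorbed into one increment of $k$, with $i'$ staying in the range $1\leq i'\leq (m-1)+1-a'$, without violating $|\boldsymbol c'|+k'+p_0'\leq p$. To do this I would check the degree count case by case: $a'\geq1$ forces $p_0'\leq p_0-1$, which pays for $k'=k+1$. I also need to verify that the inner cohomological degree $i+j-1$ lands back in the $(\mathbf{III}_{m-1})$ pattern after renaming.
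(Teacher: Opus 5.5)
Your overall architecture matches the paper's. You filter the quotient on $X_{P_{\bar\ell}}\cong C_{m-1}$ via \eqref{eqn-gamma-rest-middle} for $\ell<\bar\ell$ and via the flag filtration of $M_{T_{m-1,L_{\bar\ell}}}$. You record earlier kernel twists in $\boldsymbol c'$, and you Koszul-resolve $\bigwedge^{a'}M_{T_{m-1,A}}$ into one increment of $k$. Part (i) goes through, but only after you fix the set-up. $\bar\ell$ must be the index at which the quotient is \emph{first} chosen, not the maximal one; your first paragraph only covers the first quotient, whose coefficient carries no point twist. For general $\bar\ell$ the coefficient is $T_{m-1,B_{\boldsymbol c,k}-D_t}$, where $D_t$ is the sum of the $t$ points $P_\ell$ with $\ell>\bar\ell$ and $a_\ell\neq0$. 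The bound you need is $a'+\sum a'_\ell\le\sum_{\ell\le\bar\ell}a_\ell\le p_0+1-t$, not merely $\le\sum_\ell a_\ell$; this is what gives $|\boldsymbol c'|+k+p_0'\le p$.

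The genuine gap is in (ii), at the step you flag as the obstacle, and the target you set there is false. Resolving $\bigwedge^{a'}M_{T_{m-1,A}}$ on $C_{m-1}$ requires vanishing of $H^{h}$ with $h=i+j-1$ for $1\le j\le m-a'$, so $h$ runs over $[i,\,i+m-a'-1]$. Since $(\mathbf{III}_m)$ allows $i$ up to $m+1-a$, your claim $1\le i'\le m-a'$ holds only when $i=1$. For instance, take $m=3$, $a=1$, $a_{\bar\ell}=2$, $i=2$ and the piece $a'=2$. Then $j=1$ and $h=2$, which lies outside $[1,m-a']=\{1\}$ and does not vanish for dimension reasons on $C_2$. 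So keeping $a'$ as the new auxiliary index cannot work.

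The missing idea is that the auxiliary index in $(\mathbf{III}_{m-1})$ is not the exponent of any bundle, so you may reset it to $\alpha=1$. Then the required range is $1\le h\le m-1$, and the values $h\ge m$ vanish because $\dim C_{m-1}=m-1$. The budget becomes $p_0'=\beta:=\sum_\ell a'_\ell\le\sum_{\ell\le\bar\ell}a_\ell-a'\le p_0-t-1$, using both $a\ge1$ and $a'\ge1$. This pays for $k'=k+1$ via $B_{\boldsymbol c',k}-(i+j)A=B_{\boldsymbol c',k+1}-hA$. Thus the extra $A$-twist is paid for by the $a'$ wedge factors that leave the $\Gamma$-product, not by a ``drop from $H^i$''. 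Finally, you do not address the case $a'=0$ in (ii), and it cannot use $k'=k+1$. There one keeps $k'=k$ and $\alpha=1$, uses $\beta\le p_0-t$, and discards $i=m$ by dimension.
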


\begin{proof}
We may discard any graded piece that vanishes for rank reasons. Both
$(\mathbf I_m)$ and $(\mathbf {III}_m)$ contain the same product of
$\Gamma$-factors on $C_m$:
\begin{equation}\label{eqn_gamma_vector_bundle_m}
\bigotimes_{\ell=0}^{r-2}
\bigwedge^{a_\ell}\Gamma^{(m)}_{L_\ell},
\quad\textrm{with } a+\sum_{\ell=0}^{r-2}a_\ell=p_0+1.
\end{equation}
Here $a=0$ for $(\mathbf I_m)$, whereas $a\geq1$ is the auxiliary index for $M_{T_{m,A}}$ in $(\mathbf {III}_m)$. Treat the nonzero $\Gamma_{L_{\ell}}$-factors in decreasing
order. For each one,
\eqref{eqn-wedge-gamma-middle} gives
\[
0\longrightarrow
\bigwedge^{a_\ell}S^{m-1}H^0(L_\ell)
\otimes T_{m,\sO_C(-P_{\ell})}
\longrightarrow
\bigwedge^{a_\ell}\Gamma^{(m)}_{L_\ell}
\longrightarrow
\bigwedge^{a_\ell}M_{T_{m-1,L_\ell}}
\longrightarrow0.
\]
Fix a quotient term, and let $\bar\ell$ be the index at which the quotient
is first chosen. The larger indices have already contributed kernel terms,
and hence the point twist
\[
D_t:=
\sum_{\substack{\ell>\bar\ell, a_\ell\neq0}}P_{\ell},
\qquad
t:=\deg(D_t).
\]
The quotient is supported on $X_{P_{\bar\ell}}\cong C_{m-1}$. The same flag points
$P_{\bar\ell},\ldots,P_{r-2}$ give a filtration of
$M_{T_{m-1,L_{\bar\ell}}}$ whose graded pieces are
$\Gamma^{(m-1)}_{L_\ell}$, for $\bar\ell\leq\ell\leq r-2$, and
$M_{T_{m-1,A}}$. By Remark~\ref{rmk:01}, the graded pieces of
$\bigwedge^{a_{\bar\ell}}M_{T_{m-1,L_{\bar\ell}}}$ have the form
\[
\bigotimes_{\ell=\bar\ell}^{r-2}
\bigwedge^{a_\ell'}\Gamma^{(m-1)}_{L_\ell}
\otimes\bigwedge^{a'}M_{T_{m-1,A}},
\quad
a'+\sum_{\ell=\bar\ell}^{r-2}a_\ell'=a_{\bar\ell}.
\]
For the remaining $\Gamma_{L_\ell}$-factors, where $\ell<\bar\ell$ and hence
$P_{\ell}\neq P_{\bar\ell}$, equation~\eqref{eqn-gamma-rest-middle} gives
\[
0\longrightarrow
S^{m-1}H^0\big(L_\ell(-P_{\bar\ell})\big)\otimes\sO_{C_{m-1}}
\longrightarrow
\Gamma^{(m)}_{L_\ell}\big|_{X_{P_{\bar\ell}}}
\longrightarrow
\Gamma^{(m-1)}_{L_\ell}
\longrightarrow0.
\]
By Lemma~\ref{lem-vanishing-wedge}, the exterior-power filtration of this
restriction sequence has graded pieces
\[
\bigwedge^{a_\ell-a_\ell'}
S^{m-1}H^0\big(L_\ell(-P_{\bar\ell})\big)
\otimes
\bigwedge^{a_\ell'}\Gamma^{(m-1)}_{L_\ell},
\quad 0\leq a_\ell'\leq a_\ell.
\]
Thus, up to vector-space factors, any bundle passed to $C_{m-1}$ from
\eqref{eqn_gamma_vector_bundle_m} on $C_m$ has the form:
\begin{equation*}
\bigotimes_{\ell=0}^{r-2}
\bigwedge^{a_\ell'}\Gamma^{(m-1)}_{L_\ell}
\otimes\bigwedge^{a'}M_{T_{m-1,A}}, \quad
a'+\sum_{\ell=\bar\ell}^{r-2}a_\ell'=a_{\bar\ell},
\quad
0\leq a_\ell'\leq a_\ell\quad(\ell<\bar\ell).
\end{equation*}

First consider $(\mathbf I_m)$, a bundle $\displaystyle\bigotimes_{\ell=0}^{r-2}\bigwedge^{a_\ell}\Gamma^{(m)}_{L_\ell}\otimes T_{m,B_{\boldsymbol c,k}}$ on $C_m$, after the above quotient reduction to $X_{P_{\bar{\ell}}}\cong C_{m-1}$, has the form:
\[
\bigotimes_{\ell=0}^{r-2}
\bigwedge^{a_\ell'}\Gamma^{(m-1)}_{L_\ell}
\otimes\bigwedge^{a'}M_{T_{m-1,A}}\otimes T_{m-1,B_{\boldsymbol c',k}},\quad a'+\sum_{\ell=\bar\ell}^{r-2}a_\ell'=a_{\bar\ell},
\quad
0\leq a_\ell'\leq a_\ell\quad(\ell<\bar\ell).
\]
In the above, write $D_t=\sum_{\ell=0}^{r-2}\tau_\ell P_\ell$, where
$\tau_\ell\in\{0,1\}$, then
$\boldsymbol c'=\boldsymbol c+\boldsymbol\tau$. Then
\[
B_{\boldsymbol c',k}=B_{\boldsymbol c,k}-D_t,\quad c'_\ell\leq n-(m-1),
\quad k\leq n-(m-1).
\]
Let $p_0'+1:=a'+\sum_{\ell=0}^{r-2}a_\ell'$. Since $a_\ell\geq1$ for each of the $t$ indices contributing to $D_t$, we have $1\leq p_0'+1\leq\sum_{\ell\leq\bar\ell}a_\ell \leq p_0+1-t$, equivalently, $0\leq p_0'\leq p_0-t$. Therefore,
\[
|\boldsymbol c'|+k+p_0'
\leq|\boldsymbol c|+t+k+p_0-t\leq p.
\]
Thus the data $(m-1,p_0',\boldsymbol c',k)$ are admissible. If $a'=0$, it is an instance of
$(\mathbf I_{m-1})$; if $1\leq a'\leq m-1$, it is an instance of
$(\mathbf {II}_{m-1})$. If $a'\geq m$, it vanishes for rank reasons. This proves (i).

Now we consider $(\mathbf {III}_m)$ where the auxiliary index $a\geq1$, hence
$\sum_{\ell=0}^{r-2}a_\ell\leq p_0$. A bundle $\displaystyle \bigotimes_{\ell=0}^{r-2} \bigwedge^{a_\ell}\Gamma^{(m)}_{L_\ell} \otimes T_{m,B_{\boldsymbol c,k}-iA}$ on $C_m$, after the above quotient reduction to $X_{P_{\bar{\ell}}}\cong C_{m-1}$, has the forms: 
\[
    \bigotimes_{\ell=0}^{r-2}
\bigwedge^{a_\ell'}\Gamma^{(m-1)}_{L_\ell}
\otimes\bigwedge^{a'}M_{T_{m-1,A}}\otimes T_{m-1,B_{\boldsymbol c',k}-iA},\quad a'+\sum_{\ell=\bar\ell}^{r-2}a_\ell'=a_{\bar\ell},
\quad
0\leq a_\ell'\leq a_\ell\quad(\ell<\bar\ell).
    \]
Set $$\beta:=\sum_{\ell=0}^{r-2}a_\ell'.$$ Let the new auxiliary index $\alpha:=1$ for both of the following two cases. Hence
$\alpha+\sum_{\ell=0}^{r-2}a_\ell'=\beta+1$,
which satisfies the index relation for $(\mathbf {III}_{m-1})$.

Suppose first that $a'=0$. Then
$\beta\leq\sum_{\ell\leq\bar\ell}a_\ell\leq p_0-t$, since
$\sum a_\ell\leq p_0$ and each of the $t$ indices contributing to $D_t$
has $a_\ell\geq1$.
Together with the coordinate bounds verified above, the inequality
\[
|\boldsymbol c'|+k+\beta
\leq|\boldsymbol c|+t+k+p_0-t\leq p
\]
shows that $(m-1,\beta,\boldsymbol c',k)$ is admissible. If $i\leq m-1$,
the resulting group is $(\mathbf {III}_{m-1})$ for these data with
auxiliary index $\alpha=1$; if $i>m-1$, it vanishes by dimension.

Suppose now that $a'>0$. We may assume $1\leq a'\leq m-1$, for rank reasons. Apply
\eqref{eq:weight-one-koszul-resolution} with $(n,a)$ replaced by
$(m-1,a')$. For each $1\leq j\leq m-a'$, it suffices to prove the
vanishing of
\begin{equation}\label{eq:weight-one-restriction-cohomology}
H^{i+j-1}\left(
C_{m-1},
\bigotimes_{\ell=0}^{r-2}
\bigwedge^{a_\ell'}\Gamma^{(m-1)}_{L_\ell}
\otimes T_{m-1,B_{\boldsymbol c',k}-(i+j)A}
\right),
\end{equation}
Set $h:=i+j-1$. Since $a,a'\geq1$,
$\beta\leq\sum_{\ell\leq\bar\ell}a_\ell-a'\leq p_0-t-1$. Consequently, $(m-1,\beta,\boldsymbol c',k+1)$ is admissible: indeed,
\[
c'_\ell,k+1\leq n-(m-1),
\quad
|\boldsymbol c'|+(k+1)+\beta
\leq|\boldsymbol c|+t+k+1+p_0-t-1\leq p.
\]
Finally,
\[
B_{\boldsymbol c',k}-(i+j)A
=B_{\boldsymbol c',k+1}-hA.
\]
Therefore, if $h\leq m-1$, the group in
\eqref{eq:weight-one-restriction-cohomology} is an instance of
$(\mathbf {III}_{m-1})$ for the data
$(m-1,\beta,\boldsymbol c',k+1)$ with auxiliary index $\alpha=1$; if
$h>m-1$, it vanishes by dimension. This proves (ii).
\end{proof}

Choose the flag points as in Lemma~\ref{lem:weight-one-coefficients} so that all terminal coefficients have the required
cohomology vanishing.

\begin{proof}[Proof of Proposition~\ref{prop:weight-one-induction}]
It is enough to prove $(\mathbf{I}_m)$ and $(\mathbf{III}_m)$.

We argue by induction on $m$, applying the induction hypothesis only to
quotient terms.
For $m=1$, $\Gamma^{(1)}_{L_\ell}\cong\sO_C(-P_{\ell})$.
For nonzero exterior powers, $(\mathbf I_1)$ is
Lemma~\ref{lem:weight-one-coefficients}~(i). In
$(\mathbf {III}_1)$ one necessarily has $a=i=1$, and the assertion is
Lemma~\ref{lem:weight-one-coefficients}~(ii).

Assume $m\geq2$ and the proposition holds at level $m-1$. Let $\ell_1>\cdots>\ell_s$
be the indices for which $a_{\ell_j}\neq0$, and let $\mathcal E_0$ denote
the vector bundle occurring in $(\mathbf I_m)$ or $(\mathbf {III}_m)$. Let $i_j:X_{P_{\ell_j}}\cong C_{m-1}\hookrightarrow C_m$ be the inclusion. Successive application of
\eqref{eqn-wedge-gamma-middle} gives exact sequences
\begin{equation}\label{eq:weight-one-kernel-quotient}
0\longrightarrow\mathcal E_j
\longrightarrow\mathcal E_{j-1}
\longrightarrow i_{j*}\mathcal Q_j
\longrightarrow0,
\qquad 1\leq j\leq s.
\end{equation}
Here $\mathcal E_j$ is obtained by taking the kernel term for
$\ell_1,\ldots,\ell_j$, while $\mathcal Q_j$ is supported on
$X_{P_{\ell_j}}\cong C_{m-1}$.
The intermediate $\mathcal E_j$ stay on $C_m$ and need not have
admissible data. The full set of point twists is
$J=\{\ell:a_\ell\neq0\}$ considered above; the induction hypothesis
will be applied only to the quotients $\mathcal Q_j$ on $C_{m-1}$.

The terminal $\mathcal E_s$ has no $\Gamma$-factor. The required cohomology vanishing follows from
\[
H^1\left(C_m,T_{m,B_{\boldsymbol c,k}-\sum_{\ell\in J}P_\ell}\right)=0
\]
in $(\mathbf I_m)$, and
\[
H^i\left(C_m,T_{m,B_{\boldsymbol c,k}-\sum_{\ell\in J}P_\ell-iA}\right)=0,
\]
in $(\mathbf{III}_m)$.
The first is covered by Lemma~\ref{lem:weight-one-coefficients}(i), since
\[
\#\{\ell:a_\ell\neq0\}=|J|\leq p_0+1.
\]
The second is covered by part (ii), since $a\geq 1$ and
\[
\#\{\ell:a_\ell\neq0\}=|J|
\leq\sum a_\ell=p_0+1-a\leq p_0,
\qquad
i\leq m.
\]

By contrast, each $\mathcal Q_j$ is a genuine restriction term to the divisor $C_{m-1}$.
Lemma~\ref{lem:weight-one-restriction} filters $\mathcal Q_j$ into
admissible terms at level $m-1$. The induction hypothesis, together with
$(\mathbf {III}_{m-1})\Rightarrow(\mathbf {II}_{m-1})$, gives
$H^1(C_{m-1},\mathcal Q_j)=0$ in the proof of $(\mathbf I_m)$ and
$H^i(C_{m-1},\mathcal Q_j)=0$ in the proof of $(\mathbf {III}_m)$.
Writing $u=1$ in the first case and $u=i$ in the second, we have
\[
H^u(C_m,i_*\mathcal Q_j)=H^u(C_{m-1},\mathcal Q_j)= 0,
\qquad 1\leq j\leq s.
\]
Starting with the directly controlled terminal kernel $\mathcal E_s$ and
using the cohomology sequences of
\eqref{eq:weight-one-kernel-quotient} in reverse order, we obtain
successively
\[
H^u(C_m,\mathcal E_{s-1})=\cdots=H^u(C_m,\mathcal E_0)=0.
\]
This proves $(\mathbf I_m)$ and $(\mathbf {III}_m)$; hence
$(\mathbf {II}_m)$ follows.
\end{proof}

\begin{proof}[Proof of Theorem~\ref{thm_weight_one}]
Under condition~(\ref{hyp:weight-one-residual}),
Proposition~\ref{prop:weight-one-induction} and the preceding reduction prove
the theorem. If $n = 1$ and $g\geq 1$, condition~(\ref{hyp:weight-one-L}) gives $\deg (B-L)\leq e-1$, hence
$h^0(C,B-L) \leq e$, so condition~(\ref{hyp:weight-one-residual}) holds in the curve case. It remains to consider $g\geq1$, $n\geq2$, and the failure of
condition~(\ref{hyp:weight-one-residual}). 

Then $h^0(C,B-nL)>e\geq0$ and
$h^1(C,B-nL)>0$, so $B-nL$ is effective and special. 
We have
\[
0\leq \deg (B-nL)\leq2g-2,\qquad h^1(C,B-nL)\leq g.
\]
The last inequality follows by multiplying sections of $K_C-(B-nL)$ by a
nonzero section of $B-nL$.

Put $d:=\deg L\geq 2g+1$, $r=r(L)=d-g\geq g+1$. Since $\deg B\geq (n+1)g+p$, we obtain
\begin{align*}
p \leq \deg B-(n+1)g
&=n\deg L+\deg (B-nL)-(n+1)g\\
&\leq n(r+g)+2g-2-(n+1)g\\ 
&= nr+g-2.
\end{align*}

Denote $\mathcal L:=T_{n,L}$, $\mathcal F:=T_{n,B-nL}$, and
\[
R:=\operatorname{rk}M_{\mathcal L}=\binom{r+n}{n}-1.
\]
Since $\deg(B-nL)\geq0$ and $d\geq2g+1$, every $B-nL+jL$, $j\geq1$, is
nonspecial. Thus $H^i(C_n,\mathcal F+j\mathcal L)=0$ for $i>0$, $j\geq1$. By the coefficient shift and Lemma~\ref{cor:el12-asymptotic-syzygies}, we have
\vspace{0.25em}
\[
\begin{split}
K_{p,1}(C_n,T_{n,B};T_{n,L})
&=K_{p,n+1}(C_n,\mathcal F;\mathcal L)\\
&\cong H^n\left(C_n,\bigwedge^{p+n}M_{\mathcal L}
                \otimes(\mathcal F+\mathcal L)\right).
\end{split}
\]
Serre
duality and $\det M_{\mathcal L}=\mathcal L^{-1}$ give the following identification:
\begin{equation}\label{eq:weight-one-top-row}
K_{p,1}(C_n,T_{n,B};T_{n,L})
\cong
H^0\left(C_n,\bigwedge^{R-p-n}M_{\mathcal L}
              \otimes(K_{C_n}-\mathcal F)\right)^{\vee}.
\end{equation}
By Lemma~\ref{lem:green-vanishing}, the desired vanishing follows
if $R-p-n\geq h^0(C_n,K_{C_n}-\mathcal{F})$. Serre duality and the
tautological cohomology formula give
\[
h^0(C_n,K_{C_n}-\mathcal{F})
=h^n(C_n,T_{n,B-nL})=\binom{h^1(C,B-nL)}{n}\leq\binom{g}{n}.
\]

It therefore suffices to prove $R-p-n\geq\binom{g}{n}$.
Recall that $n\geq 2$, $g\geq 1$, and
\begin{equation}\label{align_p}
r-1\geq g,\qquad p\leq nr+g-2,\qquad
R=\binom{r+n}{n}-1.
\end{equation}
By Vandermonde's identity, retaining the terms indexed by $0,1,n$,
we obtain
\[
\begin{aligned}
\binom{r+n}{n}
&=\sum_{j=0}^{n}\binom{r-1}{j}\binom{n+1}{n-j}\\
&\geq \binom{r-1}{n}+(r-1)\binom{n+1}{2}+n+1.
\end{aligned}
\]
Consequently, using the relation~\eqref{align_p}, we have
\[
\begin{aligned}
R-p-n
&\geq \binom{r-1}{n}+(r-1)\binom{n+1}{2}-nr-g+2=\binom{r-1}{n}+(r-1)\binom{n}{2}-n-g+2\\
&\geq \binom{g}{n}+g(n-1)-n-g+2=\binom{g}{n}+(n-2)(g-1)\geq \binom{g}{n},
\end{aligned}
\]
where the second inequality follows from $r-1\geq g$ and
$\binom{n}{2}\geq n-1$.
This completes the proof.
\end{proof}

\section{Further questions}\label{sec-questions}
\subsection{Gonality conjecture for symmetric products of curves}
The gonality theorem for curves of genus $g\geq2$ describes the weight-one syzygies of a sufficiently positive line bundle. If
\[
\deg(L)\geq2g+\gon(C),
\]
then
\[
K_{p,1}(C,L)\neq0\quad\Longleftrightarrow\quad1\leq p\leq r(L)-\gon(C),
\]
where $r(L)=h^0(C,L)-1$. Green and Green--Lazarsfeld proved the nonvanishing direction~\cite{Green:KoszulI,Lazarsfeld:SyzFiniteSet}; Ein--Lazarsfeld proved asymptotic vanishing~\cite{Ein:Gonality}; and Niu--Park established the effective bound above~\cite{Niu:Gonality}.

To study the analogous question on $C_n$, assume $\deg(L)\geq2g+\gon(C)$ and put $d=\deg(L)$ and $r=r(L)=d-g$. Following~\cite{Park:GonSeql}, write
\[
d_j(C):=\min\{\deg(A):h^0(C,A)\geq j+1\},\qquad
\gamma_j(C):=d_j(C)-j\quad(j\geq0).
\]
Thus $(d_j(C))_{j\geq0}$ is the gonality sequence, with $d_1=\gon(C)$. By minimality, a bundle $A_j$ attaining $d_j$ is globally generated and has exactly $j+1$ sections. If $j\geq1$ and $d_j\leq d-2g$, then $h^0(C,L-A_j)=r-d_j+1\geq g+1$. Applying Green--Lazarsfeld nonvanishing~\cite[Appendix]{Green:KoszulI} to
\[
T_{n,L}=T_{n,A_j}+T_{n,L-A_j}
\]
and using the tautological cohomology formula gives
\[
K_{P_{n,j}(L),1}(C_n,T_{n,L})\neq0,\qquad
P_{n,j}(L):=\binom{n+j}{n}+\binom{n+r-d_j}{n}-3.
\]
In this degree range, the largest index occurs at $j=1$. Indeed, subtracting $j-1$ points gives $d_j\geq d_1+j-1$, so
\[
j,\ r-d_j\geq1,\qquad j+(r-d_j)\leq r-d_1+1.
\]
Since $t\mapsto\binom{n+t}{n}$ is increasing and convex on nonnegative integers, $P_{n,j}(L)\leq P_{n,1}(L)$. Thus
\begin{equation}\label{eq:gonality-factorization-index}
\Lambda_n(C,L):=\max_{\substack{j\geq1\\d_j\leq d-2g}}P_{n,j}(L)
=\binom{n+r-\gon(C)}{n}+n-2.
\end{equation}

It follows that
\[
K_{p,1}(C_n,T_{n,L})\neq0
\quad\text{for}\quad 1\leq p\leq\Lambda_n(C,L).
\]
We conjecture that there are no further linear syzygies once $L$ is sufficiently positive.

\begin{conjecture}\label{ques:last-linear-syzygy}
For every smooth projective curve $C$ of genus $g\geq1$ and every $n\geq1$, there exists $d_0(C,n)\geq2g+\gon(C)$ such that, for every line bundle $L$ with $\deg(L)\geq d_0(C,n)$,
\[
K_{p,1}(C_n,T_{n,L})\neq0\quad\Longleftrightarrow\quad1\leq p\leq\Lambda_n(C,L).
\]
\end{conjecture}

For $n=1$ this is the gonality theorem; for $n>1$, no effective value of $d_0(C,n)$ is proposed. For $C=\PP^1$ and $d\geq2$, decompositions $d=j+(d-j)$ with $1\leq j\leq d-1$ give the analogous threshold
\[
\Lambda_n(\PP^1,\sO_{\PP^1}(d))=\binom{n+d-1}{n}+n-2.
\]
Under $C_n\cong\PP^n$, the corresponding vanishing assertion is the weight-one, coefficient-zero case of~\cite[Conjecture~7.5]{ein-lazarsfeld-asymptotic}.

The effective vanishing direction for curves follows from the following result~\cite[Theorem~1.3]{Niu:Gonality}.
\begin{theorem}[Niu--Park]\label{thm-niu-park}
Let $B$ and $L$ be line bundles on a smooth projective curve $C$ of genus $g\geq0$. If $B$ is $p$-very ample and
\[
h^1(C,L-B)\leq r(B)-p-1,
\]
then $L$ is $p$-very ample and $K_{p,1}(C,B;L)=0$.
\end{theorem}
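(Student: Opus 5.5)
The plan is to follow the secant-sheaf approach of Ein--Lazarsfeld's gonality argument \cite{Ein:Gonality}, as refined by Niu--Park, rather than the flag-point filtration of Section~\ref{sec-syzygy}. Since $B$ is $p$-very ample, the tautological secant sheaf $E_{p+1,B}$ on $C_{p+1}$ (the rank $p+1$ bundle with fibre $H^0(\xi,B|_\xi)$) is globally generated by $H^0(C,B)$. Let $M_{E_{p+1,B}}$ be its kernel bundle. By the Ein--Lazarsfeld description of weight-one Koszul cohomology,
\[
K_{p,1}(C,B;L)=0 \iff H^1\bigl(C_{p+1},M_{E_{p+1,B}}\otimes N_{p+1,L}\bigr)=0,
\]
where $N_{p+1,L}=T_{p+1,L}(-\delta_{p+1})$ is the determinant bundle. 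This holds provided the auxiliary group $H^1(C_{p+1},N_{p+1,L})$ vanishes, which follows from $H^1(C,L)$-type vanishing and the tautological cohomology formula. The first step is to set up this identification carefully.

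The second step establishes the $H^1$ vanishing by induction on $p$, using the addition map $\sigma_{p+1}\colon C_p\times C\to C_{p+1}$. The pullback $\sigma_{p+1}^*E_{p+1,B}$ sits in an elementary-transformation sequence with $E_{p,B}\boxtimes\sO$ and $\sO\boxtimes B$, twisted by the universal divisor. Lemma~\ref{lem-wedge-coker} then lets us compare $M_{E_{p+1,B}}$ with $M_{E_{p,B}}$ and with $M_B$. After pushing forward and projecting to $C$, the required vanishing splits into two pieces:
\begin{itemize}
\item a term on $C_p$, handled by the induction hypothesis for $B(-x)$, which is $(p-1)$-very ample;
\item a term on $C$, which becomes a surjectivity statement of the form
\[
H^0(B)\otimes H^0(L-B+\xi)\to H^0(L+\xi),
\]
roughly a multiplication-map surjectivity twisted by a length-$p$ subscheme.
\end{itemize}

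The third step, which I expect to be the main obstacle, is this curve-level surjectivity. I would prove it with the base-point-free pencil trick and Green's vanishing (Lemma~\ref{lem:green-vanishing}). The cokernel is dual to
\[
H^0\bigl(\bigwedge^{k}M_{B(-\xi)}\otimes(K_C-L+B)\bigr)
\]
for a suitable $k$. Green's lemma kills this group once
\[
k\geq h^0(K_C-L+B)=h^1(L-B).
\]
Here $k$ is at most $\operatorname{rk}M_{B(-\xi)}=r(B)-p$. The hypothesis $h^1(L-B)\leq r(B)-p-1$ is exactly what makes the numerology close, with one unit to spare for the twist by the extra point. The delicate part is tracking that the relevant exterior power index equals $r(B)-p-1$ uniformly as $\xi$ varies.

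Finally, $p$-very ampleness of $L$ should follow from the same estimate. For $\xi$ of length $p+1$, we need $h^0(K_C-L+\xi)=h^0(K_C-L)$. An extra section would, after multiplication by sections of $B(-\xi)$ (which has $r(B)-p$ sections), produce too many sections of $K_C-L+B$, contradicting $h^1(L-B)\leq r(B)-p-1$.
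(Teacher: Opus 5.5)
The paper does not prove this statement. It is quoted from \cite[Theorem~1.3]{Niu:Gonality} and used only in Section~\ref{sec-questions} to recover the curve case of Theorem~\ref{thm_weight_one}, so your sketch has to be judged on its own. Two parts are fine. Your $p$-very ampleness argument works and gives more than you claim. For $\deg\xi=p+1$, any nonzero section of $K_C-L+\xi$ multiplies $H^0(B(-\xi))$, of dimension $r(B)-p>h^1(L-B)$, injectively into $H^0(K_C-L+B)$. Hence $H^1(C,L(-\xi))=0$ for every such $\xi$. In particular $L$ is nonspecial and $H^1(C_{p+1},N_{p+1,L})=0$, so your first step is fine. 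Only the inclusion $K_{p,1}(C,B;L)\subseteq H^1(C_{p+1},M_{E_{p+1,B}}\otimes N_{p+1,L})$ is needed there.

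The gap is the core vanishing, which your second and third steps only gesture at. Put $\mathcal G:=\sigma_{p+1}^*(M_{E_{p+1,B}}\otimes N_{p+1,L})$ on $C_p\times C$; pulling back is harmless in characteristic zero. Your two ``pieces'' come from two different projections, and neither one closes the argument.

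The fibre of $\operatorname{pr}_2$ over $x$ is $X_x\cong C_p$, where $\mathcal G$ restricts to $M_{E_{p,B(-x)}}\otimes N_{p,L(-x)}$. The hypothesis is stable under $(B,L,p)\mapsto(B(-x),L(-x),p-1)$, so induction gives $R^1\operatorname{pr}_{2*}\mathcal G=0$. Leray then leaves $H^1(C,\operatorname{pr}_{2*}\mathcal G)$, which is not a pointwise multiplication statement and which you do not address.

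The fibre of $\operatorname{pr}_1$ over $\xi$ is $C$, where $\mathcal G$ restricts to $M_{B(-\xi)}\otimes L(-\xi)$, and
\[
H^1\bigl(C,M_{B(-\xi)}\otimes L(-\xi)\bigr)^{\vee}\cong H^0\Bigl(C,\bigwedge^{r(B)-p-1}M_{B(-\xi)}\otimes(K_C+B-L)\Bigr)=0
\]
by Lemma~\ref{lem:green-vanishing}. This is where your numerology really lives. The map $H^0(B)\otimes H^0(L-B+\xi)\to H^0(L+\xi)$ that you display is governed by $M_B$, not $M_{B(-\xi)}$. But this only gives $R^1\operatorname{pr}_{1*}\mathcal G=0$. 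It leaves $H^1(C_p,\operatorname{pr}_{1*}\mathcal G)$, where $\operatorname{pr}_{1*}\mathcal G$ has fibres $H^0(C,M_{B(-\xi)}\otimes L(-\xi))\otimes N_{p,L}|_{\xi}$, and this is not an instance of your induction hypothesis.

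Fibrewise $H^1$-vanishing in both directions does not by itself force $H^1(C_p\times C,\mathcal G)=0$. For example, take $(A\boxtimes A)(-2\Delta)$ on $E\times E$ with $E$ elliptic and $\deg A=3$: both families of fibres have $H^1=0$, yet $h^1\geq 3$. Turning the pointwise Green vanishing into a global one is exactly the content of the Niu--Park theorem, and your proposal supplies no mechanism for it.

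Lemma~\ref{lem-wedge-coker} is also not the relevant tool here. The natural sequence
$0\to\sigma_{p+1}^*M_{E_{p+1,B}}\to\operatorname{pr}_1^*M_{E_{p,B}}\to(\sO_{C_p}\boxtimes B)(-\mathcal D_p)\to0$, where $\mathcal D_p$ is the universal divisor, has a line-bundle quotient rather than a sheaf supported on a divisor.
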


For $n=1$ and $g\geq1$, Theorem~\ref{thm_weight_one} assumes $\deg(B)\geq2g+p$ and $\deg(L)\geq2g+p+1$. The first bound makes $B$ nonspecial and $p$-very ample. Riemann--Roch then rewrites the cohomological condition in Theorem~\ref{thm-niu-park} as
\[
\deg(L)\geq2g+p+h^0(C,L-B).
\]
As $h^0(C,L-B)
\le\max\{1,\deg(L-B)\}
\le\deg L-2g-p$.
This recovers the base case of Theorem~\ref{thm_weight_one}.
An analogue of Theorem~\ref{thm-niu-park} for symmetric products might establish the vanishing predicted by Conjecture~\ref{ques:last-linear-syzygy}.
\subsection{Green's conjecture for symmetric product of curves}
For the canonical polarization, higher terms of the gonality sequence can contribute. Assume that $C$ is nonhyperelliptic of genus $g\geq3$. Then $T_{n,K_C}$ is very ample by Proposition~\ref{prop-p-very-ample}; for $n>1$ it differs from $K_{C_n}=T_{n,K_C}(-\delta_n)$. Define the Clifford sequence by
\[
c_j(C):=\min\{\deg(A)-2j:h^0(C,A)=j+1,\ h^1(C,A)\geq2\}\quad(j\geq1),
\]
with $c_j(C)=+\infty$ if the set is empty. Since $h^1(C,A_j)=g-\gamma_j$, $c_j(C)$ is finite precisely when $\gamma_j\leq g-2$, and then $c_j=d_j-2j=\gamma_j-j$.

For any $A$ with $a=h^0(A)\geq2$ and $b=h^1(A)\geq2$, Green--Lazarsfeld applied to $K_C=A+(K_C-A)$ gives directly
\[
K_{\binom{n+a-1}{n}+\binom{n+b-1}{n}-3,1}(C_n,T_{n,K_C})\neq0.
\]
For fixed $a=j+1$, the largest $b$ is $g-j-c_j$. Thus the largest index supplied by these decompositions is
\begin{equation}\label{eq:canonical-factorization-index}
\Lambda_n^{\mathrm{can}}(C):=\max_{\substack{j\geq1\\c_j(C)<+\infty}}
\left\{\binom{n+j}{n}+\binom{n+g-1-j-c_j(C)}{n}-3\right\},
\end{equation}
where an empty maximum is $0$. This shows that nonvanishing holds for $1\leq p\leq\Lambda_n^{\mathrm{can}}(C)$.

\begin{conjecture}\label{conj:tautological-green}
For every smooth nonhyperelliptic curve $C$ of genus $g\geq3$ and every $n\geq1$,
\[
K_{p,1}(C_n,T_{n,K_C})=0\quad\text{for all}\quad p>\Lambda_n^{\mathrm{can}}(C).
\]
\end{conjecture}

For $n=1$ and $g\geq4$, the equality
$\Lambda_1^{\mathrm{can}}(C)=g-2-\operatorname{Cliff}(C)$ recovers Green's
conjecture. For $n=2$, the bound depends on more than the Clifford index.
The following two genus-six examples have Clifford index one, so their admissible
decompositions satisfy $a+b\leq6$. Convexity bounds the index by $10$,
which the trigonal pencil with $(a,b)=(2,4)$ attains. On a smooth plane
quintic, $\cO_C(1)$ has $(a,b)=(3,3)$ and gives index $9$. Gonality four
excludes $(2,4)$ and $(4,2)$ on the quintic, so $9$ is maximal. Thus the
two curves have the same Clifford index but different values of
$\Lambda_2^{\mathrm{can}}$.

For a general curve of genus $g\geq4$, put $k=\lfloor g/2\rfloor$. Since $\operatorname{Cliff}(C)=\lfloor(g-1)/2\rfloor$, every admissible decomposition satisfies
\[
a,b\geq2,\qquad a+b=g+1-\operatorname{Cliff}(A)\leq k+2.
\]
Convexity shows that the largest index occurs at $(a,b)=(2,k)$, which is attained by a minimal pencil. Hence
\[
\Lambda_n^{\mathrm{can}}(C)=\binom{n+k-1}{n}+n-2.
\]
This gives the following special case of Conjecture~\ref{conj:tautological-green}.

\begin{conjecture}\label{ques:generic-tautological-green}
Fix $g\geq4$ and $n\geq2$, and set $k=\lfloor g/2\rfloor$. For a general smooth curve $C$ of genus $g$, 
\[
K_{p,1}(C_n,T_{n,K_C})=0\quad\text{for all}\quad p\geq\binom{n+k-1}{n}+n-1.
\]
For $n=2$, this predicts $K_{p,1}(C_2,T_{2,K_C})=0$ for all $p\geq\binom{k+1}{2}+1$.
\end{conjecture}

For higher weights, the work of Choe--Kwak--Park suggests that the full gonality sequence may be relevant~\cite{Park:GonSeql}.
\begin{question}
Can the gonality sequence describe the nonvanishing range of the higher-weight Koszul cohomology of $T_{n,L}$?
\end{question}

{\parskip=12pt 

}

\begin{thebibliography}{[EHKSY]}

\bibitem[BL25]{bangere-lacini-adjoint}
Purnaprajna Bangere and Justin Lacini.
\newblock {\em Syzygies of adjoint linear series on projective varieties}.
\newblock Duke Math. J., \textbf{174}(3):473--499, 2025.

\bibitem[CKP23]{Park:GonSeql}
Junho Choe, Sijong Kwak, and Jinhyung Park.
\newblock Syzygies of secant varieties of smooth projective curves and gonality
sequences.
\newblock {\em arXiv:2307.03405}, 2023.

\bibitem[EL93]{ein-lazarsfeld-arbitrary} Lawrence Ein and Robert Lazarsfeld. \textit{Syzygies and Koszul cohomology of smooth projective varieties of arbitrary dimension}. Invent. Math.,
111(1):51–67, 1993.

\bibitem[EL12]{ein-lazarsfeld-asymptotic}
Lawrence Ein and Robert Lazarsfeld.
\newblock {\em Asymptotic syzygies of algebraic varieties}.
\newblock Invent. Math., \textbf{190}:603--646, 2012.

\bibitem[EL15]{Ein:Gonality}
Lawrence Ein and Robert Lazarsfeld,
\newblock {\em The gonality conjecture on syzygies of algebraic curves of large
	degree},
\newblock Publ. Math. Inst. Hautes \'Etudes Sci., \textbf{122}: 301--313, 2015.

\bibitem[EL26]{EL26}
Lawrence Ein and Robert Lazarsfeld.
\emph{Lectures on the Syzygies and Geometry of Algebraic Varieties}.
University Lecture Series, vol.~81.
American Mathematical Society, Providence, RI, 2026.

\bibitem[ELY16]{ELY16}
Lawrence Ein, Robert Lazarsfeld, and David Yang.
\newblock A vanishing theorem for weight-one syzygies.
\newblock \emph{Algebra Number Theory}, 10(9):1965--1981, 2016.

\bibitem[ENP20]{ENP}
Lawrence Ein, Wenbo Niu, and Jinhyung Park,
\newblock {\em Singularities and syzygies of secant varieties of nonsingular
	projective curves},
\newblock Invent. Math., \textbf{222} (2): 615--665, 2020.


\bibitem[Gre84a]{Green:KoszulI}
Mark~L. Green,
\newblock {\em Koszul cohomology and the geometry of projective varieties},
\newblock J. Differential Geom., \textbf{19} (1): 125--171, 1984.

\bibitem[Gre84b]{Green:KoszulII}
Mark~L. Green,
\newblock {\em Koszul cohomology and the geometry of projective varieties. II},
\newblock J. Differential Geom., \textbf{20} (1): 279--289, 1984.

\bibitem[GL88]{Lazarsfeld:SyzFiniteSet}
M.~Green and R.~Lazarsfeld.
\newblock Some results on the syzygies of finite sets and algebraic curves.
\newblock {\em Compositio Math.}, 67(3):301--314, 1988.

\bibitem[GP96]{GP96}
F. J. Gallego and B. P. Purnaprajna,
\newblock {\em Higher syzygies of elliptic ruled surfaces},
\newblock J. Algebra, \textbf{186}(2):626--659, 1996.


\bibitem[Har77]{hartshorne} Robin Hartshorne, {\em{Algebraic Geometry}}, Graduate Texts in Mathematics \textbf{52}, Springer-Verlag, New York, 1977.

\bibitem[Mum70]{Mum70}
David Mumford.
\newblock Varieties defined by quadratic equations.
\newblock In \emph{Questions on Algebraic Varieties}
(C.I.M.E., III Ciclo, Varenna, 1969), pages 29--100.
Edizioni Cremonese, Rome, 1970.

\bibitem[NP24]{Niu:Gonality}
Wenbo Niu and Jinhyung Park.
\newblock Effective gonality theorem on weight-one syzygies of algebraic
curves.
\newblock To appear in {\em Duke Math. J.}; arXiv:2405.13446.

\bibitem[Par05]{Park:RuledSurfacesII}
Euisung Park.
\newblock {\em On higher syzygies of ruled surfaces II}.
\newblock J. Algebra, \textbf{294}(2):590--608, 2005.


\bibitem[Rei88]{reider-surface}
Igor Reider. {\em{Vector bundles of rank 2 and linear systems on algebraic
surfaces}}. Ann.\ of Math. (2), 127(2):309–316, 1988.

\bibitem[She20]{She20}
John Sheridan.
\newblock \emph{Divisor varieties and syzygies of symmetric
products of curves}.
\newblock Ph.D. thesis, Stony Brook University, 2020.

\bibitem[She22]{She22}
John Sheridan.
\newblock \emph{Projective normality of canonical symmetric squares}.
\newblock arXiv:2211.08313, 2022.

\end{thebibliography}
\end{document}